\date{\today}
\let\oldsection\section
\renewcommand\section{\setcounter{equation}{0}\oldsection}
\newtheorem{theorem}{Theorem}[section]
\newtheorem{lemma}{Lemma}[section]
\newtheorem{proposition}{Proposition}[section]
\newtheorem{remark}{Remark}[section]
\begin{document}

\title[local existence of strong solutions to the compressible N-S eq]{Local well-posedness of isentropic compressible Navier-Stokes equations with vacuum }

\author{Huajun Gong}
\address{College of Mathematics and Statistics, Shenzhen University, Shenzhen, China}
\email{Huajun84@szu.edu.cn}
\author{Jinkai Li}
\address{South China Research Center for Applied Mathematics and Interdisciplinary Studies, South China Normal University, Zhong Shan Avenue West 55, Guangzhou 510631, China}
\email{jklimath@gmail.com}
\author{Xian-gao Liu}
\address{School of Mathematic Sciences, Fudan University, Shanghai, 200433, China}
\email{xgliu@fufan.edu.cn}
\author{Xiaotao Zhang}
\address{South China Research Center for Applied Mathematics and Interdisciplinary Studies, South China Normal University, Zhong Shan Avenue West 55, Guangzhou 510631, China}
\email{{xtzhang@m.scnu.edu.cn}}
\maketitle

\begin{abstract}
In this paper, the local well-posedness of strong solutions to the Cauchy problem of the isentropic compressible Navier-Stokes equations
is proved with the initial date being allowed to have vacuum. The main contribution of this paper is that the well-posedness is
established without assuming any compatibility condition on the initial data, which was widely used before in many literatures
concerning the well-posedness of compressible Navier-Stokes equations in the presence of vacuum.
\end{abstract}

\section{Introduction}

The isentropic compressible Navier-Stokes equations read as
\begin{eqnarray}\label{u}
  \rho (u_t +(u\cdot\nabla)u) -\mu \Delta u -(\lambda +\mu ) \nabla \text{div}\, u +\nabla P=0,\\
\label{rho}  \rho_t +\text{div}\, (\rho u )=0,
\end{eqnarray}
in $\mathbb{R}^3 \times (0,T)$, where the density $\rho\geq0$ and the velocity field $u\in\mathbb R^3$ are the unknowns. Here $P$ is the scalar pressure given as $P=a\rho^{\gamma}$, for two constants $a>0$ and $\gamma >1$. The viscosity constants $\lambda, \mu$ satisfy the physical requirements:
$$\mu>0,\quad 2\mu+3\lambda \geq 0.$$
System (\ref{u})--(\ref{rho}) is complemented with the following initial-boundary conditions
\begin{equation}\label{u-bd1}
\left\{
  \begin{array}{ll}
    (\rho,{\rho} u)|_{t=0} =(\rho_0, {\rho_0} u_0),\\
    u(x,t)\rightarrow 0,  \text{ as } |x|\rightarrow \infty.
  \end{array}
\right.
\end{equation}

There are extensive literatures on the studies of the
compressible Navier-Stokes equations. In the absence of vacuum, that is
the density has positive lower bound, the system is locally
well-posed for large initial data, see, e.g., \cite{NASH62,ITAYA71,VOLHUD72,TANI77,VALLI82,LUKAS84}; however, the
global well-posedness is still unknown. It has been known that system in one dimension is globally well-posed for large initial data, see, e.g., \cite{KANEL68,KAZHIKOV77,KAZHIKOV82,ZLOAMO97,ZLOAMO98,CHEHOFTRI00,JIAZLO04} and the references therein, see \cite{LILIANG16} for the large time behavior of the solutions, and also \cite{LJK18,LIXIN17,LIXIN19} for the global well-posedness for the case that with nonnegative density. For the multi-dimensional case, the global well-posedness holds for small initial data, see, e.g., \cite{MATNIS80,MATNIS81,MATNIS82,MATNIS83,PONCE85,VALZAJ86,DECK92,HOFF97,KOBSHI99,DANCHI01,CHIDAN15,DANXU18,FZZ18}.
In the presence of vacuum, that is the density may vanish
on some set, global existence (but without uniqueness) of weak solutions has been known, see \cite{LIONS93,LIONS98,FEIREISL01,JIAZHA03,FEIREISL04P,FEIREISL04B,BRESCH18}. Local well-posedness of strong solutions was proved for
suitably regular initial data under some extra compatibility conditions (being mentioned in some details below) in \cite{CHOKIM04,CHOKIM06-1,CHOKIM06-2}. In general, when the vacuum is involved, one can only expect solutions in the homogeneous
Sobolev spaces, that is, the $L^2$ integrability of $u$ is not expectable, see \cite{LWX}.
Global well-posedness holds if the initial basic energy is sufficiently small, see \cite{HLX12,LIXIN13,HUANGLI11,WENZHU17}; however, due to the blow-up results in \cite{XIN98,XINYAN13}, the corresponding entropy of the global solutions in \cite{HUANGLI11,WENZHU17} must be
infinite somewhere in the vacuum region, if the initial density is compactly supported.

In this paper, we focus on the well-posedness of the Cauchy problem to system (\ref{u})--(\ref{rho}) in the presence of vacuum.
As mentioned in the previous paragraph, local well-posedness of strong solutions to the compressible Navier-Stokes in the presence
of vacuum has already been studied
in \cite{CHOKIM04,CHOKIM06-1,CHOKIM06-2}, where, among some other conditions, the
regularity assumption
\begin{equation}
  \label{REGULARITY}
\rho_0-\rho_\infty\in H^1\cap W^{1,q},\quad u_0\in D_0^1\cap D^2,
\end{equation}
for some constant $\rho_\infty\in[0,\infty)$, and the compatibility condition
\begin{equation}
  \label{COMPATIBILITY}
  -\mu\Delta u_0-(\mu+\lambda)\nabla\text{div}\,u_0+\nabla P(\rho_0)=\sqrt{\rho_0}g,
\end{equation}
for some $g\in L^2$, were used. Similar assumptions as (\ref{REGULARITY}) and (\ref{COMPATIBILITY}) were also widely used in studying
many other fluid dynamical systems when the vacuum is involved, see, e.g., \cite{HLX12,HUANGLI11,LIXIN13,WENZHU17,CHOEKIM,ZJW,HXDWY,WHW,WHYDSJ,CQTZWYJ,LJK,GHJLJK,GHJLJKXC}.

Assumptions (\ref{REGULARITY}) and (\ref{COMPATIBILITY}) are so widely used when the initial vacuum is taken into
consideration, one may ask if the regularities on the initial data
stated in (\ref{REGULARITY}) can be relaxed and if the compatibility condition (\ref{COMPATIBILITY}) is necessary for the
local well-posedness of strong solutions to the corresponding system.
In a previous work \cite{LJK17}, the second author of this paper considered these questions for
the inhomogeneous incompressible Navier-Stokes equations, and found that the compatibility condition is not necessary for the
local well-posedness. The aim of the current paper is to give the same answer for the isentropic compressible Navier-Stokes equations. As will be shown in this paper that we can indeed
reduce the regularities of the initial velocity in (\ref{REGULARITY}) and remove the
compatibility condition (\ref{COMPATIBILITY}), without loosing the existence
and uniqueness, but the prices that we need to pay are the following:
(i) the corresponding strong solutions do
not have as high regularities as those in \cite{CHOKIM04,CHOKIM06-1,CHOKIM06-2} where both (\ref{REGULARITY}) and (\ref{COMPATIBILITY}) were
assumed; (ii) one can only ask for the continuity, at the initial time, of
the momentum $\rho u$, instead of the velocity $u$ itself.

Before stating our main results, let us introduce some notations. Throughout this paper, we use $L^r=L^r(\mathbb R^3)$ and $W^{k,r}=
W^{k,r}(\mathbb R^3)$ to denote, respectively,
the standard Lebesgue and Sobolev spaces in $\mathbb{R}^3$, where $k$ is a positive integer and $r\in [1,\infty]$. When $r=2$, we use $H^k$
instead of $W^{k,2}$. For simplicity, we use $\|\cdot\|_r=\|\cdot\|_{L^r}$. We denote
$$D^{k,r}=\Big\{ u\in L^1_{loc}(\mathbb{R}^3)\,\Big|\,\|\nabla^k u\|_r<\infty\Big\}, \quad D^k=D^{k,2},$$
$$D^{1}_0=\Big\{ u\in L^6\,\Big|\,\|\nabla u\|_2<\infty \Big\}.$$
For simplicity of notations, we adopt the notation
$$
\int fdx=\int_{\mathbb R^3}fdx.
$$

Our main result is the following:

\begin{theorem}\label{keyth}
Suppose that the initial data $(\rho_0,u_0)$ satisfies
$$\rho_0\geq0,\quad \rho-\rho_\infty\in H^1\cap W^{1,q},\quad u_0\in D_0^1,\quad\rho_0u_0\in L^2,$$
for some $\rho_\infty\in [0, \infty)$ and some $q\in (3,6)$.

Then, there exists a positive time $\mathcal T$, depending only on $\mu$, $\lambda$, $a$, $\gamma$, $q$, and the upper bound of
$\psi_0:=\|\rho_0\|_\infty+\|\rho_0-\rho_\infty\|_2+\|\nabla\rho_0\|_{L^2\cap L^q}+\|\nabla u_0\|_2$, such that system (\ref{u})--(\ref{rho}), subjects to (\ref{u-bd1}), admits a unique solution $(\rho, u)$ on $\mathbb{R}^3\times (0,\mathcal T),$ satisfying
\begin{eqnarray*}
  &\rho-\rho_\infty\in C([0,\mathcal T]; L^2)\cap L^\infty(0,\mathcal T; H^1\cap W^{1,q}),\quad
  \rho_t\in L^\infty(0,\mathcal T; L^2), \\
  &\rho u\in C([0,\mathcal T]; L^2), \quad u\in L^\infty(0,\mathcal T; D_0^1)\cap L^2(0,\mathcal T; D^2),\quad\sqrt\rho u_t\in L^2(0,
  \mathcal T; L^2),\\
  & \sqrt tu\in L^\infty(0,\mathcal T; D^2)\cap L^2(0,\mathcal T; D^{2,q}),\quad\sqrt tu_t\in L^2(0,\mathcal T; D_0^1).
\end{eqnarray*}
\end{theorem}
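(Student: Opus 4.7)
The plan is to build the solution by approximation: regularize the initial data to fit the classical framework, apply the existing local existence theory, and then pass to the limit after establishing a priori estimates that depend only on $\psi_0$. Concretely, I would construct $(\rho_0^\epsilon,u_0^\epsilon)$ with $\rho_0^\epsilon$ strictly positive, $u_0^\epsilon\in D_0^1\cap D^2$, and $(\rho_0^\epsilon,u_0^\epsilon)\to(\rho_0,u_0)$ in the topology of the hypotheses. Since $\rho_0^\epsilon\ge c_\epsilon>0$ and $u_0^\epsilon\in D^2$, the compatibility condition (\ref{COMPATIBILITY}) is automatic with $g^\epsilon:=(-\mu\Delta u_0^\epsilon-(\mu+\lambda)\nabla\text{div}\,u_0^\epsilon+\nabla P(\rho_0^\epsilon))/\sqrt{\rho_0^\epsilon}\in L^2$, so the Cho--Kim theory \cite{CHOKIM04,CHOKIM06-1,CHOKIM06-2} yields a unique strong solution $(\rho^\epsilon,u^\epsilon)$ on some interval $[0,T_\epsilon]$. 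The goal is then to extend all of these to a common interval $[0,\mathcal T]$ on which they are bounded by a quantity depending only on $\psi_0$.

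\textbf{Uniform estimates.} These would be obtained in the following order. (i) Propagation of $\|\rho^\epsilon\|_\infty$, $\|\rho^\epsilon-\rho_\infty\|_2$ and $\|\nabla\rho^\epsilon\|_{L^2\cap L^q}$ from the continuity equation, once enough control on $\nabla u^\epsilon$ is available. (ii) An $L^\infty(D_0^1)$ bound on $u^\epsilon$ together with $\int_0^t\|\sqrt{\rho^\epsilon}u^\epsilon_s\|_2^2\,ds$, by testing (\ref{u}) against $u^\epsilon_t$; elliptic regularity for the Lam\'e operator then converts this into $\int_0^t\|u^\epsilon\|_{D^2}^2\,ds$. (iii) Time-weighted bounds $\sqrt t\,u^\epsilon\in L^\infty(D^2)\cap L^2(D^{2,q})$ and $\sqrt t\,u^\epsilon_t\in L^2(D_0^1)$. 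Step (iii) is the technical heart of the proof: under (\ref{COMPATIBILITY}) one could take $\sqrt{\rho}u_t|_{t=0}\in L^2$ directly, but here that fails uniformly in $\epsilon$. I would therefore differentiate (\ref{u}) in time, test against $tu^\epsilon_t$, and exploit the weight $t$ to kill the $t=0$ boundary contribution $\|\sqrt{\rho_0^\epsilon}g^\epsilon\|_2^2$, which may diverge as $\epsilon\to0$. The main obstacle is closing the resulting inequality uniformly in $\epsilon$: the time-differentiated convection term $\rho^\epsilon_t(u^\epsilon\cdot\nabla)u^\epsilon+\rho^\epsilon(u^\epsilon_t\cdot\nabla)u^\epsilon$ and the pressure term $\partial_t\nabla P(\rho^\epsilon)$ must be controlled via the $W^{1,q}$ bound on $\rho^\epsilon$, where the range $q\in(3,6)$ enters crucially through the Sobolev embedding $W^{1,q}\hookrightarrow L^\infty$. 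A nonlinear Gronwall argument on a short interval then produces $\mathcal T=\mathcal T(\psi_0)$.

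\textbf{Passage to the limit and uniqueness.} With uniform bounds in hand, weak-$*$ compactness in the relevant spaces, combined with Aubin--Lions strong compactness of $\rho^\epsilon u^\epsilon$ in $L^2$ (using the $L^2$ bound on $\partial_t(\rho^\epsilon u^\epsilon)$ extracted from (\ref{u})), permits passage to the limit in every nonlinearity. The continuity $\rho-\rho_\infty\in C([0,\mathcal T];L^2)$ follows from the transport equation together with $\rho_t\in L^\infty(L^2)$, while $\rho u\in C([0,\mathcal T];L^2)$ at $t=0$ comes from (\ref{u}) and the control of $\sqrt\rho u_t$; continuity of $u$ itself is \emph{not} available because there is no $L^\infty(L^2)$ bound on $u$ ($u_0$ only lies in $D_0^1\hookrightarrow L^6$), which is precisely the second price announced in the introduction. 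Finally, uniqueness is proved by a standard energy argument on the difference of two solutions: an $L^2$ estimate for $\rho^1-\rho^2$ and a $\sqrt\rho$-weighted $L^2$ estimate for $u^1-u^2$, closed by Gronwall using the $L^\infty$ bound on $\rho$, the $L^\infty(D_0^1)\cap L^2(D^2)$ bound on $u$, and the $\sqrt t$-weighted higher norms (the weight is compatible with Gronwall since $\int_0^t s^{-1/2}\,ds<\infty$).
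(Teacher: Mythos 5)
Your existence scheme is essentially the paper's: regularize with $\rho_0+\tfrac1n$ and $u_{0n}\in D_0^1\cap D^2$ so the compatibility condition is automatic, invoke the Cho--Kim local theory, prove estimates independent of the regularization whose only singular input is removed by testing the time-differentiated momentum equation with the weight $t$, get a lifespan depending only on $\psi_0$ by a nonlinear Gronwall/continuation argument, and pass to the limit by Aubin--Lions; the endpoint continuity $\rho u\in C([0,\mathcal T];L^2)$ with datum $\rho_0u_0$ via an $L^2$ bound on $\partial_t(\rho u)$ is also as in the paper. That half of the proposal is sound.

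The genuine gap is in the uniqueness step. You assert that a standard energy argument for the differences $\sigma=\check\rho-\hat\rho$, $W=\check P-\hat P$, $v=\check u-\hat u$ closes "by Gronwall", justifying the use of the $\sqrt t$-weighted norms by $\int_0^t s^{-1/2}ds<\infty$. But the coefficient that multiplies $\|\sigma\|_2^2+\|W\|_2^2$ in the $v$-energy inequality contains $\|\nabla\check u_t\|_2^2$ (or $\|\check u_t\|_2\|\nabla\check u_t\|_2$), and in the regularity class of Theorem \ref{keyth} these are only controlled after multiplication by $t$: they behave like $t^{-1}\|\sqrt t\nabla\check u_t\|_2^2$, i.e. the singularity is $O(t^{-1})$, not $O(t^{-1/2})$, and is \emph{not} integrable. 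A straightforward Gronwall on the coupled system therefore does not close. What saves the argument is structural: $\sigma$ and $W$ vanish at $t=0$ and satisfy $\frac{d}{dt}f\le\delta f+A\sqrt G$ with $G=\mu\|\nabla v\|_2^2$, so $f(t)=O(\sqrt t)$, which exactly compensates the $1/t$ blow-up of the coefficient $\beta$; this is formalized in the paper by the two-function Gronwall-type Lemma \ref{uni-lem} (taken from \cite{LJK17}), whose hypothesis is $t\beta\in L^1$, not $\beta\in L^1$. Your sketch is missing this ingredient, and the justification you give would fail as stated.

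A second, smaller point your sketch glosses over: the term $\int|\sigma||\check u_t||v|\,dx$ cannot be handled uniformly in both far-field regimes. When $\rho_\infty>0$ one can bound $\|\check u_t\|_2$ (hence $\|\check u_t\|_3$) using the positive far-field density, but when $\rho_\infty=0$ no $L^2$ bound on $\check u_t$ is available, and the paper must estimate this term as $\|\sigma\|_{3/2}\|\check u_t\|_6\|v\|_6$, which forces an additional transport estimate for $\|\sigma\|_{3/2}$ and a case split $\rho_\infty=0$ versus $\rho_\infty>0$. Any complete uniqueness proof in your framework has to address this as well.
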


%\begin{remark}
%One can verify that the solution in Theorem \ref{keyth} admit the following additional regularities
%$$
%\sqrt t\sqrt\rho u_t\in L^\infty(0,\mathcal T; L^2), \quad \sqrt t\nabla^2u\in L^2(0,\mathcal T; L^q).
%$$
%\end{remark}

%\begin{theorem}\label{unique}
%Assume that the initial data $(\rho_0,u_0)$ satisfies
%$$0\leq \rho_0 \leq \bar{\rho},\quad \rho_0\in D_0^1,\quad \nabla \rho_0^{\gamma-\frac12} \in L^2\cap L^q, \quad u_0\in H^1_0,$$
%for some $\bar{\rho}\in (0,\infty)$ and $q\in (3,6)$.\\
%If $(\hat{\rho},\hat{u})$ and $(\check{\rho},\check{u})$ are two strong solutions to the problem (\ref{u})-(\ref{p}) corresponding to the same data $(\rho_0, u_0)$, then
%$$\hat{\rho}=\check{\rho}, \qquad \hat{u}=\check{u}.$$
%\end{theorem}

\begin{remark}
(i) Compared with the local well-posedness results established in \cite{CHOKIM04,CHOKIM06-1}, in Theorem \ref{keyth}, $u_0$ is not required to be in $D^2$ and we do not need any compatibility conditions on the initial data.

(ii) The same result as in \ref{keyth} also holds for the initial boundary value problem if imposing suitable boundary conditions on the velocity.
\end{remark}

\section{Lifespan estimate and some a priori estimates}

As preparations of proving the main result being carried out in the next section, the aim of this section is to
give the lifespan estimate and some a priori estimates, under the condition that the initial velocity $u_0\in D_0^1\cap D^2$
and some compatibility condition; however, it should be emphasized that all these estimates depend neither on $\|\nabla^2u_0\|_2$
nor on the compatibility condition.

We start with the following local existence and uniqueness result, which has been essentially proved in \cite{CHOKIM04,CHOKIM06-1,CHOKIM06-2}.

\begin{proposition}
  \label{Prop1}
Let $\rho_\infty\in[0,\infty)$ and $q\in(3,6)$ be fixed constants. Assume that the data $\rho_0$ and $u_0$ satisfy the regularity condition
$$
\rho_0\geq0, \quad \rho_0-\rho_\infty\in H^1\cap W^{1,q}, \quad u_0\in D_0^1\cap D^2,
$$
and the compatibility condition
$$
-\mu\Delta u_0-(\mu+\lambda)\nabla\text{div}\,u+\nabla P_0=\sqrt{\rho_0}g,
$$
for some $g\in L^2$, where $P_0=a\rho_0^\gamma$.

Then, there exists a small time $T_*>0$ and a unique strong solution $(\rho, u)$ to (\ref{u})--(\ref{rho}), subject to (\ref{u-bd1}), such that
\begin{eqnarray*}
  &\rho-\rho_\infty\in C([0,T_*]; H^1\cap W^{1,q}),\quad u\in C([0,T_*]; D_0^1\cap D^2)\cap L^2(0,T_*; D^{2,q}),\\
  & \rho_t\in C([0,T_*];L^2\cap L^q),\quad u_t\in L^2(0,T_*; D_0^1),\quad\sqrt\rho u_t\in L^\infty(0,T_*; L^2).
\end{eqnarray*}
\end{proposition}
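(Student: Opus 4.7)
The plan is to follow the Cho--Kim linearization/iteration scheme, constructing a sequence of approximate solutions $(\rho^{n+1}, u^{n+1})$ on a short time interval by decoupling the continuity and momentum equations. Starting from $u^0\equiv 0$ (or a smooth extension of $u_0$), one recursively solves the linear transport equation
$$
\partial_t\rho^{n+1}+\text{div}(\rho^{n+1}u^n)=0,\quad \rho^{n+1}|_{t=0}=\rho_0,
$$
via characteristics, followed by the linear parabolic Lam\'e-type problem
$$
\rho^{n+1}(\partial_t u^{n+1}+(u^n\cdot\nabla)u^{n+1})-\mu\Delta u^{n+1}-(\mu+\lambda)\nabla\text{div}\,u^{n+1}=-\nabla P(\rho^{n+1}),
$$
with $u^{n+1}|_{t=0}=u_0$; the latter, although degenerate where $\rho^{n+1}$ vanishes, admits a unique strong solution for each fixed $\rho^{n+1}$ by Galerkin approximation.

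The heart of the argument is a set of $n$-uniform a priori estimates on some interval $[0,T_*]$, where $T_*$ depends only on $\mu,\lambda,a,\gamma,q$ and the relevant norms of the data. Transport estimates (combined with Sobolev control of $\nabla u^n$ coming from the previous step) bound $\rho^{n+1}-\rho_\infty$ in $L^\infty(0,T_*;H^1\cap W^{1,q})$ and $\rho^{n+1}_t$ in $L^\infty(0,T_*;L^2\cap L^q)$. Testing the momentum equation against $u^{n+1}_t$ yields control of $\|\nabla u^{n+1}\|_2$ and of $\sqrt{\rho^{n+1}}u^{n+1}_t$ in $L^2_tL^2$; differentiating the equation in $t$ and testing against $u^{n+1}_t$ promotes this to $\sqrt{\rho^{n+1}}u^{n+1}_t\in L^\infty(0,T_*;L^2)$ and $\nabla u^{n+1}_t\in L^2(0,T_*;L^2)$, \emph{provided} the trace $\sqrt{\rho_0}u^{n+1}_t(0)$ is controlled in $L^2$. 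Evaluating the momentum equation at $t=0$ identifies this quantity as $-\sqrt{\rho_0}(u_0\cdot\nabla)u_0-g$, whose $L^2$ norm is finite precisely by the compatibility hypothesis; this is the sole reason the condition is imposed here. Elliptic regularity for the Lam\'e operator, applied with right-hand side $\rho^{n+1}(u^{n+1}_t+(u^n\cdot\nabla)u^{n+1})+\nabla P(\rho^{n+1})$, then upgrades the spatial regularity to $L^\infty(0,T_*;D^2)\cap L^2(0,T_*;D^{2,q})$.

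With the uniform bounds in place, I would prove convergence by showing that the difference sequence $(\rho^{n+1}-\rho^n,u^{n+1}-u^n)$ is Cauchy in a weaker energy norm controlling $\|\bar\rho\|_2+\|\sqrt\rho\,\bar u\|_2+\|\nabla\bar u\|_{L^2_tL^2}^2$; a Gronwall-type inequality with constant $O(T_*)$ forces contraction for $T_*$ small. The limit inherits the uniform bounds and solves (\ref{u})--(\ref{rho}), and the same weak-norm energy estimate applied to the difference of two candidate solutions gives uniqueness. The main obstacle, and the reason the compatibility condition is assumed at this stage, is propagating any $u_t$ bound up to $t=0$: in the vacuum region $u_t$ enters the equation with zero coefficient, so it is only controllable weighted by $\sqrt\rho$, and the required initial value of $\sqrt\rho\,u_t$ is exactly what the hypothesis on $g$ supplies. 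Since the detailed calculations are already carried out in \cite{CHOKIM04,CHOKIM06-1,CHOKIM06-2}, we only sketch the scheme here and refer to those papers for the full arguments.
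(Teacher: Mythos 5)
Your proposal is correct and coincides with the paper's treatment: the paper does not prove Proposition \ref{Prop1} either, but simply invokes the results of \cite{CHOKIM04,CHOKIM06-1,CHOKIM06-2}, and your sketch is exactly the linearization--iteration scheme of those works, including the key point that the compatibility condition is needed only to give the initial trace of $\sqrt{\rho}\,u_t$ in $L^2$. No discrepancy to report.
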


As will be shown in this section, the existence time $T_*$ in the above proposition can be chosen
depending only on $\mu$, $\lambda$, $a$, $\gamma$, $q$, and the upper bound of
$$\Psi_0:=\|\rho_0\|_\infty+\|P_0-P_\infty\|_2+\|P_0\|_\infty+\|\nabla P_0\|_2+\|\nabla P_0\|_q+\|\nabla u_0\|_2,$$
with $P_\infty=a\rho_\infty^\gamma$, and, in particular, independent of $\|\nabla^2u_0\|_2$ and $\|g\|_2$.
The following quantity plays the key role in this section
\begin{eqnarray*}
    \Psi(t)&:=& (\|\rho\|_\infty+\|P-P_\infty\|_2+\|P\|_\infty+\|\nabla P\|_2+\|\nabla P\|_q\\
&&+\|\nabla u\|_2+\|\sqrt t\sqrt\rho u_t\|_2 )(t)+1.
\end{eqnarray*}

In the rest of this section, until the last proposition, we always assume $(\rho, u)$ is a solution to system (\ref{u})--(\ref{rho}), subject to (\ref{u-bd1}), on $\mathbb R^3\times(0,T)$, satisfying the regularities stated in Proposition \ref{Prop1}, with $T_*$ there replaced with $T$.

Throughout this section, except otherwise explicitly mentioned,
we denote by $C$ a generic constant depending only on $\mu$, $\lambda$, $a$, $\gamma$, and the upper bound of $\Psi_0.$

\begin{proposition}\label{Prop2}
The following estimates hold
  \begin{eqnarray*}
    \|\nabla^2u\|_2^2
  &\leq& C(\Psi^{10}+\Psi\|\sqrt\rho u_t\|_2^2), \\
   \|\sqrt\rho(u\cdot\nabla)u\|_2^2 &\leq& C(\Psi^9+\Psi^5\|\sqrt\rho u_t\|_2),\\
  \|\nabla^2u\|_q
   &\leq& C(\|\sqrt t\nabla u_t\|_2^2+\|\sqrt\rho u_t\|_2^2+t^{-\frac{5q-6}{4q}}+\Psi^{\alpha_1}), \quad q\in(3,6),
\end{eqnarray*}
with $\alpha_1:=\max\left\{12, \frac{(5q-6)^2}{2q(6-q)}\right\}$.
\end{proposition}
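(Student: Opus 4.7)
The overall strategy is to recast the momentum equation \eqref{u} as the Lam\'e system
\begin{equation*}
-\mu\Delta u-(\lambda+\mu)\nabla\operatorname{div}u=-\rho u_t-\rho(u\cdot\nabla)u-\nabla P
\end{equation*}
and invoke the standard $L^r$-elliptic regularity $\|\nabla^2 u\|_r\leq C(\|\rho u_t\|_r+\|\rho(u\cdot\nabla)u\|_r+\|\nabla P\|_r)$ for $r\in(1,\infty)$. The problem then reduces to bounding each right-hand-side term by $\Psi$, $\|\sqrt\rho u_t\|_2$, and (for the $L^q$ case) $\|\sqrt t\,\nabla u_t\|_2$, using Sobolev embedding and Gagliardo--Nirenberg interpolation together with Young's inequality to absorb powers of $\|\nabla^2 u\|_2$.

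For the first two estimates I would work at $r=2$. The bounds $\|\rho u_t\|_2\leq C\sqrt\Psi\,\|\sqrt\rho u_t\|_2$ and $\|\nabla P\|_2\leq\Psi$ are immediate. For the convective term, Sobolev--H\"older combined with $\|\nabla u\|_3\leq C\|\nabla u\|_2^{1/2}\|\nabla^2 u\|_2^{1/2}$ gives $\|\rho(u\cdot\nabla)u\|_2\leq C\Psi^{5/2}\|\nabla^2 u\|_2^{1/2}$, and a Young split absorbs the half-power of $\|\nabla^2 u\|_2$ into the left side, yielding $\|\nabla^2 u\|_2\leq C(\sqrt\Psi\,\|\sqrt\rho u_t\|_2+\Psi^5)$, which is (1) upon squaring. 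The same Sobolev chain gives $\|\sqrt\rho(u\cdot\nabla)u\|_2^2\leq C\Psi^4\|\nabla^2u\|_2$, and inserting (1) produces (2).

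The $L^q$ estimate is the delicate part. For the convective term, the bound $\|u\|_\infty\leq C\|\nabla u\|_2^{1/2}\|\nabla^2u\|_2^{1/2}$ together with the interpolation $\|\nabla u\|_q\leq C\|\nabla u\|_2^{(6-q)/(2q)}\|\nabla^2u\|_2^{(3q-6)/(2q)}$ gives $\|\rho(u\cdot\nabla)u\|_q\leq C\Psi^{1+3/q}\|\nabla^2u\|_2^{(2q-3)/q}$; substituting (1) and one Young inequality (valid since $(2q-3)/q<2$) controls it by $\|\sqrt\rho u_t\|_2^2+\Psi^\beta$ with $\beta\leq 12$. The principal obstacle is the term $\|\rho u_t\|_q$. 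Writing $\rho u_t=\sqrt\rho\,(\sqrt\rho u_t)$ and interpolating $\|\sqrt\rho u_t\|_q$ between $L^2$ and $L^6$, where the latter is bounded by $C\sqrt\Psi\,\|\nabla u_t\|_2=C\sqrt\Psi\,t^{-1/2}\|\sqrt t\,\nabla u_t\|_2$, leads to
\begin{equation*}
\|\rho u_t\|_q\leq C\Psi^{(2-\theta)/2}\|\sqrt\rho u_t\|_2^{\theta}\,t^{-(1-\theta)/2}\|\sqrt t\,\nabla u_t\|_2^{1-\theta},\qquad \theta=\tfrac{6-q}{2q}.
\end{equation*}
Three carefully balanced Young splits are then required: one against $\|\sqrt t\,\nabla u_t\|_2^{1-\theta}$ with conjugate exponents $(2/(1+\theta),\,2/(1-\theta))$ to produce $\|\sqrt t\,\nabla u_t\|_2^2$; one against $\|\sqrt\rho u_t\|_2^{2\theta/(1+\theta)}$ with conjugate exponents $((1+\theta)/\theta,\,1+\theta)$ to produce $\|\sqrt\rho u_t\|_2^2$ and leave a residual of the form $\Psi^{(5q-6)/(2q)}t^{-(3q-6)/(2q)}$; and a final split of this residual into $\Psi^A+t^{-B}$ under the prescribed singularity $B=(5q-6)/(4q)$, which forces exactly $A=(5q-6)^2/(2q(6-q))$, the second entry in the definition of $\alpha_1$ (the $\max\{12,\cdot\}$ covering the convective contribution). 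The hard part is keeping the exponents in these three successive Young splits mutually consistent with the prescribed $t^{-(5q-6)/(4q)}$ singularity; once that bookkeeping is arranged, everything else is routine.
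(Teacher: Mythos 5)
Your proposal is correct and follows essentially the same route as the paper: the Lam\'e elliptic estimate in $L^2$ and $L^q$, the Sobolev/Gagliardo--Nirenberg bounds $\|\sqrt\rho(u\cdot\nabla)u\|_2^2\leq C\Psi^4\|\nabla^2u\|_2$ and $\|\rho(u\cdot\nabla)u\|_q\leq C\Psi^{1+3/q}\|\nabla^2u\|_2^{(2q-3)/q}$, the interpolation $\|\rho u_t\|_q\leq C\Psi^{\frac{5q-6}{4q}}\|\sqrt\rho u_t\|_2^{\frac{6-q}{2q}}\|\nabla u_t\|_2^{\frac{3q-6}{2q}}$ with $\|\nabla u_t\|_2=t^{-1/2}\|\sqrt t\nabla u_t\|_2$, and Young splittings whose exponent bookkeeping (producing $t^{-\frac{5q-6}{4q}}$ and $\Psi^{\frac{(5q-6)^2}{2q(6-q)}}$) matches the paper's. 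The only differences are cosmetic choices of H\"older pairings and a more explicit spelling-out of the Young inequalities that the paper compresses into one step.
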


\begin{proof}
Applying the elliptic estimates to (\ref{u}) yields
$$
  \|\nabla^2u\|_2^2
   \leq C\|\rho\|_\infty(\|\sqrt\rho u_t\|_2^2+\|\sqrt\rho(u\cdot\nabla)u\|_2^2)+C\|\nabla P\|_2^2.
$$
By the H\"older and Sobolev inequality, one has
$$
  \|\sqrt\rho (u\cdot\nabla)u\|_2^2 \leq \|\rho\|_\infty\|u\|_6^2\|\nabla u\|_2\|\nabla u\|_6
  \leq C\Psi^4\|\nabla^2u\|_2.
$$
Substituting the above inequality into the previous one and using the Cauchy inequality, one gets
\begin{eqnarray*}
  \|\nabla^2u\|_2^2 &\leq &C(\Psi\|\sqrt\rho u_t\|_2^2+\Psi^5\|\nabla^2u\|_2+\Psi^2)\\
   &\leq& \frac12\|\nabla^2u\|_2^2+C(\Psi\|\sqrt\rho u_t\|_2^2+\Psi^{10}),
\end{eqnarray*}
that is
\begin{equation}\label{LI-1}
  \|\nabla^2u\|_2^2
  \leq C(\Psi^{10}+\Psi\|\sqrt\rho u_t\|_2^2),
\end{equation}
and, consequently,
$$
  \|\sqrt\rho(u\cdot\nabla)u\|_2^2 \leq C(\Psi^9+\Psi^5\|\sqrt\rho u_t\|_2),
$$
proving the first two conclusions.

It follows from the H\"older and Gagliardo-Nirenberg inequalities that
\begin{eqnarray}
  &&\|\rho(u\cdot\nabla)u\|_q \leq \|\rho\|_\infty\|u\|_{\frac{6q}{6-q}}\|\nabla u\|_6
  \nonumber\\
  &\leq& C\|\rho\|_\infty\|u\|_6^{\frac3q}\|\nabla u\|_6^{2-\frac3q}\leq C\|\rho\|_\infty\|\nabla u\|_2^{\frac3q}\|\nabla^2u\|_2^{\frac{2q-3}{q}},\label{LQ1}
\end{eqnarray}
from which, by the Young inequality and using (\ref{LI-1}),
one has
\begin{eqnarray*}
  \|\rho(u\cdot\nabla)u\|_q &\leq&C\Psi^{\frac{q+3}{q}}(\Psi^{10}+\Psi\|\sqrt\rho u_t\|_2^2
  )^{\frac{2q-3}{2q}} \\
&\leq& C\Psi^3(\Psi^{9}+\|\sqrt\rho u_t\|_2^2
  )^{\frac{2q-3}{2q}} \nonumber\\
 &\leq& C(\Psi^{2q}+\Psi^9+\|\sqrt\rho u_t\|_2^2)
 \leq C(\Psi^{12}+ \|\sqrt\rho u_t\|_2^2).
\end{eqnarray*}
It follows from the H\"older and Sobolev inequalities that
\begin{equation}
\label{LQ2}
  \|\rho u_t\|_q\leq\|\rho\|_\infty^{\frac{5q-6}{4q}}\|\sqrt\rho u_t\|_2^{\frac{6-q}{2q}}\|u_t\|_6^{\frac{3q-6}{2q}}
\leq C\|\rho\|_\infty^{\frac{5q-6}{4q}}\|\sqrt\rho u_t\|_2^{\frac{6-q}{2q}}\|\nabla u_t\|_2^{\frac{3q-6}{2q}},
\end{equation}
and further by the Young inequality that
\begin{eqnarray*}
  \|\rho u_t\|_q&\leq&C\Psi^{\frac{5q-6}{4q}}
  t^{-\frac{3q-6}{4q}}\|\sqrt t\nabla u_t\|_2^{\frac{3q-6}{2q}}\|\sqrt\rho u_t\|_2^{\frac{6-q}{2q}}\nonumber\\
  &\leq&C(\|\sqrt t\nabla u_t\|_2^2+\|\sqrt\rho u_t\|_2^2+t^{-\frac{5q-6}{4q}}+\Psi^{\frac{(5q-6)^2}{2q(6-q)}}).
\end{eqnarray*}
Thanks to the above two, and applying the elliptic estimates to (\ref{u}), one obtains
\begin{eqnarray*}
  \|\nabla^2u\|_q&\leq&C(\|\rho u_t\|_q+\|\rho(u\cdot\nabla)u\|_q+\|\nabla P\|_q)\nonumber\\
  &\leq&C(\|\sqrt t\nabla u_t\|_2^2+\|\sqrt\rho u_t\|_2^2+t^{-\frac{5q-6}{4q}}+\Psi^{\alpha_1}),
\end{eqnarray*}
proving the conclusion.
\end{proof}

\begin{proposition}
\label{Prop3}
The following estimate holds
$$
  \sup_{0\leq t\leq T}(\|\nabla u\|_2^2+\|P-P_\infty\|_2^2)
  +\int_0^T\|\sqrt\rho u_t\|_2^2 dt
  \leq C+C\int_0^T\Psi^{10}dt.
$$
\end{proposition}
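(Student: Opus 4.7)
The plan is to combine two coupled energy identities: a kinetic energy identity for $u$ obtained by testing the momentum equation with $u_t$, and an $L^2$ identity for $P-P_\infty$ derived from the continuity equation.

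First, I would take the $L^2$ inner product of \eqref{u} with $u_t$. After integrating by parts on the viscous terms, this yields
$$\int\rho|u_t|^2\,dx+\frac12\frac{d}{dt}\bigl(\mu\|\nabla u\|_2^2+(\lambda+\mu)\|\text{div}\,u\|_2^2\bigr)=-\int\rho(u\cdot\nabla)u\cdot u_t\,dx-\int\nabla P\cdot u_t\,dx.$$
Since only $P-P_\infty$ is in $L^2$, the pressure term is rewritten as $-\int\nabla(P-P_\infty)\cdot u_t\,dx=\int(P-P_\infty)\text{div}\,u_t\,dx$, and an integration by parts in $t$ extracts the total derivative $\frac{d}{dt}\int(P-P_\infty)\text{div}\,u\,dx-\int P_t\,\text{div}\,u\,dx$, where $P_t=-u\cdot\nabla(P-P_\infty)-\gamma P\text{div}\,u$ is obtained from \eqref{rho} together with $P=a\rho^\gamma$. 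Moving the total derivative to the LHS defines a modified energy $\mathcal E(t)$ which, by Cauchy's inequality, satisfies $\mathcal E(t)\geq\tfrac{\mu}{4}\|\nabla u\|_2^2-C\|P-P_\infty\|_2^2$.

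Next, I would control each RHS term using Proposition \ref{Prop2}. The convective contribution is bounded by $\|\sqrt\rho u_t\|_2\|\sqrt\rho(u\cdot\nabla)u\|_2\leq C\|\sqrt\rho u_t\|_2(\Psi^{9/2}+\Psi^{5/2}\|\sqrt\rho u_t\|_2^{1/2})$ and hence by $\epsilon\|\sqrt\rho u_t\|_2^2+C\Psi^{10}$ via Young's inequality. The two pieces produced by expanding $P_t$, namely $\int u\cdot\nabla(P-P_\infty)\text{div}\,u\,dx$ and $\gamma\int P(\text{div}\,u)^2\,dx$, are handled by H\"older, the Sobolev embeddings $\|u\|_6\leq C\|\nabla u\|_2$ and $\|\nabla u\|_3\leq C\|\nabla u\|_2^{1/2}\|\nabla^2 u\|_2^{1/2}$, and the bound $\|\nabla^2u\|_2\leq C(\Psi^5+\Psi^{1/2}\|\sqrt\rho u_t\|_2)$ from Proposition \ref{Prop2}; they both reduce to $\epsilon\|\sqrt\rho u_t\|_2^2+C\Psi^{10}$. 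Absorbing the $\epsilon$-terms on the LHS then yields $\frac{d}{dt}\mathcal E(t)+\tfrac12\|\sqrt\rho u_t\|_2^2\leq C\Psi^{10}$.

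Independently, testing the pressure equation $(P-P_\infty)_t+u\cdot\nabla(P-P_\infty)+\gamma P\text{div}\,u=0$ against $P-P_\infty$ and integrating by parts on the advection term gives, after using the interpolation $\|P-P_\infty\|_4^2\leq\|P-P_\infty\|_\infty\|P-P_\infty\|_2$, the bound $\frac12\frac{d}{dt}\|P-P_\infty\|_2^2\leq C\Psi^3\leq C\Psi^{10}$. Adding a sufficiently large multiple of this pressure estimate to the modified-energy identity restores coercivity of the LHS in $\|\nabla u\|_2^2+\|P-P_\infty\|_2^2$, and integrating in time on $[0,t]\subset[0,T]$ and taking the supremum yields the claim. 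The main difficulty will be the bookkeeping in the previous paragraph: ensuring that every nonlinear remainder produced by integration by parts (notably the cross term $\int u\cdot\nabla(P-P_\infty)\text{div}\,u\,dx$) is dominated either by $\epsilon\|\sqrt\rho u_t\|_2^2$ or by $C\Psi^{10}$, without generating a higher power of $\Psi$ that would spoil the Gr\"onwall structure exploited in the subsequent estimates.
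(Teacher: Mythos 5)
Your proposal is correct and follows essentially the same route as the paper: test the momentum equation with $u_t$, shift the pressure term into a modified energy via the transport equation $P_t+u\cdot\nabla P+\gamma P\,\text{div}\,u=0$, control the remainders with Proposition \ref{Prop2} so they are absorbed as $\epsilon\|\sqrt\rho u_t\|_2^2+C\Psi^{10}$, and close with the separate $L^2$ estimate $\frac{d}{dt}\|P-P_\infty\|_2^2\leq C\Psi^3$. The only cosmetic differences (using $\|P-P_\infty\|_4^2\leq\|P-P_\infty\|_\infty\|P-P_\infty\|_2$ instead of Sobolev, and adding a large multiple of the pressure estimate rather than combining the two bounds afterwards) do not change the argument.
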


\begin{proof}
Multiplying (\ref{u}) with $u_t$, it follows from integration by parts that
$$
  \frac12\frac{d}{dt}(\mu\|\nabla u\|_2^2+(\mu+\lambda)\|\text{div}\,u\|_2^2)+\|\sqrt\rho u_t\|_2^2
  =-\int(\rho(u\cdot\nabla)u+\nabla P)\cdot u_tdx.
$$
Integration by parts and noticing that
\begin{equation}\label{EQP}
  P_t+u\cdot\nabla P+\gamma\text{div}\,uP=0,
\end{equation}
one deduces
\begin{eqnarray*}
  -\int\nabla P\cdot u_tdx&=&\int (P-P_\infty)\text{div}\,u_tdx\\
  &=&\frac{d}{dt}\int (P-P_\infty)\text{div}\,udx-\int P_t\text{div}\,udx\\
  &=&\frac{d}{dt}\int (P-P_\infty)\text{div}\,udx+\int(u\cdot\nabla P+\gamma\text{div}\,uP)\text{div}\,udx.
\end{eqnarray*}
Therefore
\begin{eqnarray*}
\frac{d}{dt}\left( \mu\|\nabla u\|_2^2+(\mu+\lambda)\|\text{div}\,u\|_2^2-2\int (P-P_\infty)\text{div}\,udx\right)+\|\sqrt\rho u_t\|_2^2 \nonumber\\
  =\int(u\cdot\nabla P+\gamma\text{div}\,uP)\text{div}\,udx-\int \rho(u\cdot\nabla)u\cdot u_tdx
=:I_1+I_2.
\end{eqnarray*}
By the H\"older, Sobolev, and Young inequalities, and applying Proposition \ref{Prop2}, one has
\begin{eqnarray*}
  I_1&\leq&\|u\|_6\|\nabla P\|_2\|\text{div}\,u\|_3+\gamma\|\text{div}\,u\|_2^2\|P\|_\infty\\
  &\leq& C\|\nabla u\|_2\|\nabla P\|_2\|\nabla u\|_2^{\frac12}\|\nabla^2u\|_2^{\frac12} +C\Psi^3\\
  &\leq& C\Psi^{\frac52}(\Psi^{\frac52}+\Psi^{\frac14}\|\sqrt\rho u_t\|_2^{\frac12})+C\Psi^3 \\
  &\leq& \frac14\|\sqrt\rho u_t\|_2^2+C\Psi^5,
  \end{eqnarray*}
and
\begin{eqnarray*}
  I_2& \leq& \|\sqrt\rho u_t\|_2\|\sqrt\rho(u\cdot\nabla)u\|_2\\
   &\leq& C(\Psi^{\frac92}+\Psi^{\frac52}\|\sqrt\rho u_t\|_2^{\frac12})\|\sqrt\rho u_t\|_2\\
   &\leq&  \frac14\|\sqrt\rho u_t\|_2^2+C\Psi^{10}.
\end{eqnarray*}
Therefore
$$
\frac12\frac{d}{dt}\left( \mu\|\nabla u\|_2^2+(\mu+\lambda)\|\text{div}\,u\|_2^2-2\int (P-P_\infty)\text{div}\,udx\right)+\|\sqrt\rho u_t\|_2^2
\leq C\Psi^{10},
$$
from which, one obtains by the Cauchy inequality that
\begin{equation}\label{LI-2}
  \sup_{0\leq t\leq T}\|\nabla u\|_2^2+\int_0^T\|\sqrt\rho u_t\|_2^2 dt
  \leq C\left(1+ \sup_{0\leq t\leq T}\|P-P_\infty\|_2^2+\int_0^T\Psi^{10}dt\right).
\end{equation}

Multiplying (\ref{EQP}) with $P-P_\infty$, it follows from integration by parts and the Sobolev inequality that
\begin{eqnarray*}
  \frac{d}{dt}\|P-P_\infty\|_2^2&=&-(\gamma-\frac12)\int\text{div}\,u(P-P_\infty)^2dx-\gamma P_\infty\int\text{div}\,u(P-P_\infty)dx\\
  &\leq& C\|\nabla u\|_2\|{P-P_\infty}\|_2^{\frac12}\|\nabla P\|_2^{\frac32}+C\|\nabla u\|_2\|P-P_\infty\|_2\leq C\Psi^3,
\end{eqnarray*}
which gives
\begin{equation*}
  \sup_{0\leq t\leq T}\|P-P_\infty\|_2^2\leq C+C\int_0^T\Psi^3dt.
\end{equation*}
This, combined with (\ref{LI-2}), leads to the conclusion.
\end{proof}

The $t$-weighted estimate in the next proposition is the key to remove the compatibility condition on the initial data.

\begin{proposition}
  \label{Prop4}
The following estimate holds
$$
\sup_{0\leq t\leq T}
\|\sqrt t\sqrt\rho u_t\|_2^2+\int_0^T\|\sqrt t\nabla u_t\|_2^2dt\leq C+C\int_0^T\Psi^{16}dt.
$$
\end{proposition}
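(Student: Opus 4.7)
My plan is to differentiate the momentum equation (\ref{u}) with respect to $t$, pair the result with $u_t$ in $L^2(\mathbb R^3)$, and then absorb the initial singularity by multiplying by the weight $t$. Time-differentiating (\ref{u}) gives
$$\rho u_{tt}+\rho_t u_t+(\rho(u\cdot\nabla)u)_t-\mu\Delta u_t-(\mu+\lambda)\nabla\text{div}\,u_t+\nabla P_t=0,$$
and testing against $u_t$, using $\rho_t=-\text{div}(\rho u)$ together with integration by parts to rewrite
$$\int(\rho u_{tt}+\rho_t u_t)\cdot u_t\,dx=\frac12\frac{d}{dt}\|\sqrt\rho u_t\|_2^2+\int\rho(u\cdot\nabla)u_t\cdot u_t\,dx,$$
yields the energy identity
$$\frac12\frac{d}{dt}\|\sqrt\rho u_t\|_2^2+\mu\|\nabla u_t\|_2^2+(\mu+\lambda)\|\text{div}\,u_t\|_2^2=-(J_1+J_2+J_3+J_4),$$
with $J_1:=2\int\rho(u\cdot\nabla)u_t\cdot u_t\,dx$, $J_2:=\int\rho_t(u\cdot\nabla)u\cdot u_t\,dx$, $J_3:=\int\rho(u_t\cdot\nabla)u\cdot u_t\,dx$, and $J_4:=\int\nabla P_t\cdot u_t\,dx$.

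Next, I would multiply by $t$, rewrite the LHS as $\tfrac12\tfrac{d}{dt}(t\|\sqrt\rho u_t\|_2^2)-\tfrac12\|\sqrt\rho u_t\|_2^2+\mu t\|\nabla u_t\|_2^2+(\mu+\lambda)t\|\text{div}\,u_t\|_2^2$, and integrate in time from $0$ to $T$; the boundary term at $t=0$ vanishes since $\sqrt\rho u_t\in L^\infty(0,T;L^2)$ by Proposition \ref{Prop1}, while $\int_0^T\|\sqrt\rho u_t\|_2^2\,dt$ is already controlled by Proposition \ref{Prop3}. Each $tJ_i$ would then be estimated by H\"older, Sobolev, and Gagliardo--Nirenberg together with the elliptic bound $\|\nabla^2 u\|_2^2\le C(\Psi^{10}+\Psi\|\sqrt\rho u_t\|_2^2)$ from Proposition \ref{Prop2}. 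For $J_1$ and $J_3$, splitting $\rho=\sqrt\rho\cdot\sqrt\rho$ and using $\|\sqrt\rho u\|_6\le\|\rho\|_\infty^{1/2}\|u\|_6\lesssim\Psi^{3/2}$ together with $\|u_t\|_6\lesssim\|\nabla u_t\|_2$ produces bounds of the shape $\Psi^\alpha\|\sqrt\rho u_t\|_2^{1/2}\|\nabla u_t\|_2^{3/2}$; Young's inequality then converts the sub-quadratic power of $\|\nabla u_t\|_2$ into a small multiple of $\|\nabla u_t\|_2^2$ (absorbed, after the $t$-weighting, into the LHS) plus a higher power of $\Psi$ multiplied by $\|\sqrt\rho u_t\|_2^2$, which is in turn dominated by $\Psi^2/t$ since $\sqrt t\sqrt\rho u_t$ is a summand of $\Psi$. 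For $J_4$, integration by parts gives $J_4=-\int P_t\,\text{div}\,u_t\,dx$, and substituting the transport equation $P_t=-u\cdot\nabla P-\gamma P\,\text{div}\,u$ reduces it entirely to quantities controlled by $\Psi$.

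The main obstacle will be $J_2$, which after one more integration by parts via $\rho_t=-\text{div}(\rho u)$ becomes
$$J_2=\int\rho u\cdot\nabla\bigl[(u\cdot\nabla)u\cdot u_t\bigr]dx;$$
distributing the gradient yields five-factor integrands involving $\rho$, $u$, $\nabla u$, $\nabla^2 u$, $u_t$, and $\nabla u_t$. Each occurrence of $\|\nabla^2 u\|_2$ must be traded, via Proposition \ref{Prop2}, for $\Psi^5+\Psi^{1/2}\|\sqrt\rho u_t\|_2$, and each such trade inflates the exponent on $\Psi$. The delicate point is to balance H\"older and Young so that every $\|\nabla u_t\|_2$-power appearing is strictly below $2$ (so that absorption succeeds) while the residual pure-$\Psi$ factor remains below the threshold $16$ — this exponent is essentially determined by the worst $J_2$ subproduct. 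The time weight $t$ is indispensable throughout: without it the identity would require finiteness of $\|\sqrt\rho u_t(0)\|_2^2$, which is precisely what the compatibility condition (\ref{COMPATIBILITY}) was introduced to guarantee and which we are deliberately avoiding here.
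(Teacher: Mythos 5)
Your plan coincides with the paper's proof: the same time-differentiated energy identity (your $J_1,\dots,J_4$ are, up to sign, the paper's four terms $II_2$, $II_3$, $II_4$, $II_1$), the same use of the pressure transport equation, the same elimination of $\|\nabla^2u\|_2$ via Proposition \ref{Prop2}, the same $t$-weighting with the boundary term at $t=0$ killed by $\sqrt\rho u_t\in L^\infty(0,T;L^2)$, and Proposition \ref{Prop3} to absorb $\int_0^T\|\sqrt\rho u_t\|_2^2dt$. The ``delicate'' term $J_2$ closes exactly as you anticipate: $|J_2|\le C\|\rho\|_\infty\|\nabla u\|_2^2\|\nabla^2u\|_2\|\nabla u_t\|_2\le \frac{\mu}{8}\|\nabla u_t\|_2^2+C\Psi^6\|\nabla^2u\|_2^2\le \frac{\mu}{8}\|\nabla u_t\|_2^2+C\left(\Psi^{16}+\Psi^7\|\sqrt\rho u_t\|_2^2\right)$, which is precisely where the exponent $16$ originates.
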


\begin{proof}
Differentiating (\ref{u}) in $t$ and using (\ref{rho}) yield
\begin{eqnarray*}
  \rho (\partial_t u_{t} + (u\cdot \nabla) u_t)-\mu \Delta u_t -(\lambda +\mu )\nabla \text{div}\, u_t \\
  =  -\nabla P_t+\text{div}\,(\rho u)(u_t + (u\cdot \nabla) u) -\rho( u_t\cdot \nabla) u.
\end{eqnarray*}
Multiplying it by $u_t$, integrating by parts over $\mathbb{R}^3$ and then using the continuity equation (\ref{rho}), one has
\begin{eqnarray*}
    &&\frac12 \frac{d}{dt} \|\sqrt \rho u_t\|_2^2+\mu\|\nabla u_t\|_2^2 + (\lambda +\mu)\|\text{div} \, u_t\|_2^2  \\
     &= & \int P_t \text{div} \, u_tdx +\int\text{div}\,( \rho u ) |u_t|^2dx+\int\text{div}\,( \rho u ) (u\cdot\nabla) u\cdot u_tdx\\
     &&-\int\rho (u_t\cdot \nabla u)\cdot u_t dx=: II_1+II_2+II_3+II_4.
\end{eqnarray*}
Recalling (\ref{EQP}) and using the Sobolev and Young inequalities, one deduces
\begin{eqnarray*}
  II_1&=&-\int (\gamma \text{div}\, u P +u\cdot \nabla P) \text{div} \, u_t dx\\
  &\leq& C \left(\|P\|_{\infty}||\nabla u||_{2}||\nabla u_t||_{2}+ ||u||_6 ||\nabla P||_{3}||\nabla u_t||_{2}\right)\\
  &\leq & C\left(\Psi^4+||\nabla u||_{2}^2 ||\nabla P||_{L^2\cap L^q}^2\right) + \frac\mu8 ||\nabla u_t||_{2}^2\\
  &\leq& C\Psi^4 + \frac\mu8 ||\nabla u_t||_{2}^2.
\end{eqnarray*}
Integrating by parts, using the H\"{o}lder, Sobolev and Young inequalities, and applying Proposition \ref{Prop2}, we have
\begin{eqnarray*}
% \nonumber to remove numbering (before each equation)
  II_2&=& -\int_{\mathbb{R}^3} \rho u \cdot \nabla  |u_t|^2 dx\\
   &\leq&  C ||\rho||_{\infty}^{\frac12}||u||_{6}||\sqrt{\rho}u_t||_{2}^{\frac12}||\sqrt{\rho}u_t||_{6}^{\frac12}||\nabla u_t||_{2}\\
  &\leq& C ||\rho||_{\infty}^{\frac34}||\nabla u||_{2}||\sqrt{\rho}u_t||_{2}^{\frac12}||\nabla u_t||_{2}^{\frac32}\\
   &\leq&   C\Psi^7 ||\sqrt{\rho}u_t||_{2}^{2} +\frac\mu8 ||\nabla u_t||_{2}^2,\\
  II_3&\leq&\int_{\mathbb{R}^3} \rho|u|(|\nabla u|^2|u_t|+|u||\nabla^2u||u_t|+|u||\nabla u||\nabla u_t|)dx \\
  &\leq& C||\rho||_{\infty} (||u||_{6}||\nabla u||_{3}^2||u_t||_{6}+ ||u||_{6}^2||\nabla^2 u||_{2}||u_t||_{6}\\
&&+
  ||u||_{6}^2||\nabla u||_{6}||\nabla u_t||_2)\\
  &\leq& C  ||\rho||_{\infty}||\nabla u||_{2}^{2}||\nabla^2 u||_{2}||\nabla u_t||_{2}\\
  &\leq&  C \Psi^6 ||\nabla^2 u||_{2}^2+\frac\mu8 ||\nabla u_t||_{L^2}^2\\
   &\leq&C(\Psi^{16}+\Psi^7 ||\sqrt\rho u_t||_{2}^2)+\frac\mu8 ||\nabla u_t||_{L^2}^2,
\end{eqnarray*}
and
\begin{eqnarray*}
% \nonumber to remove numbering (before each equation)
  II_4&\leq & \int_{\mathbb{R}^3} \rho |u_t|^2 |\nabla u| dx\\
    &\leq&  C ||\rho||_{\infty} ^{\frac12}||\nabla u||_{2}||\sqrt{\rho}u_t||_{2}^{\frac12}||\sqrt{\rho}u_t||_{6}^{\frac12}||u_t||_{6}\\
  &\leq& C ||\rho||_{\infty}^{\frac34}||\nabla u||_{2}||\sqrt{\rho}u_t||_{2}^{\frac12}||\nabla u_t||_{2}^{\frac32}\\
   &\leq&   C\Psi^7 ||\sqrt{\rho}u_t||_{2}^{2} +\frac\mu8 ||\nabla u_t||_{2}^2.
\end{eqnarray*}
Therefore, we have
$$
  \frac{d}{dt} \|\sqrt \rho u_t\|_2^2+\mu\|\nabla u_t\|_2^2
  \leq C(\Psi^{16}+\Psi^7 ||\sqrt\rho u_t||_{2}^2),
$$
which, multiplied by $t$, gives
\begin{eqnarray*}
  \frac{d}{dt}\|\sqrt t\sqrt\rho u_t\|_2^2+\mu\|\sqrt t\nabla u_t\|_2^2&\leq& C(\Psi^{16}+\Psi^7 ||\sqrt t\sqrt\rho u_t||_{2}^2+\|\sqrt\rho u_t\|_2^2)\\
  &\leq& C(\Psi^{16}+\|\sqrt\rho u_t\|_2^2).
\end{eqnarray*}
Integrating this in $t$ and applying Proposition \ref{Prop3}, the conclusion follows.
\end{proof}

\begin{proposition}
\label{Prop5}
  The following estimate holds
  $$
  \int_0^T(\|\nabla u\|_\infty+\|\nabla^2u\|_q)dt\leq C+C\int_0^T\Psi^{\alpha_2}dt,
  $$
  with $\alpha_2:=\max\left\{16,\alpha_1\right\}=\max\left\{16,\frac{(5q-6)^2}{2q(6-q)}\right\}$.
\end{proposition}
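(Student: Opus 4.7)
The plan is to integrate in time the third estimate of Proposition \ref{Prop2} and to handle $\|\nabla u\|_\infty$ by a Gagliardo--Nirenberg interpolation against the same quantity $\|\nabla^2 u\|_q$, so that everything reduces to bounds already established in Propositions \ref{Prop3} and \ref{Prop4}.

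First, I would integrate
\[
\|\nabla^2u\|_q\leq C(\|\sqrt t\nabla u_t\|_2^2+\|\sqrt\rho u_t\|_2^2+t^{-\frac{5q-6}{4q}}+\Psi^{\alpha_1})
\]
over $[0,T]$. The first term on the right contributes at most $C+C\int_0^T\Psi^{16}dt$ by Proposition \ref{Prop4}, the second at most $C+C\int_0^T\Psi^{10}dt$ by Proposition \ref{Prop3}. The singular factor $t^{-(5q-6)/(4q)}$ is integrable near $t=0$ precisely because $q<6$ forces the exponent to stay strictly below $1$; the resulting $t$-integral equals $\frac{4q}{6-q}T^{(6-q)/(4q)}$, a harmless constant on any bounded interval. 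The $\Psi^{\alpha_1}$ term is already of the required form.

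Next, for $\|\nabla u\|_\infty$ I would invoke the Gagliardo--Nirenberg (or Morrey) inequality on $\mathbb R^3$, valid since $q>3$:
\[
\|\nabla u\|_\infty\leq C\|\nabla u\|_2^{1-\theta}\|\nabla^2u\|_q^\theta,\qquad \theta=\frac{3q}{5q-6}\in(3/4,1).
\]
Combining $\|\nabla u\|_2\leq\Psi$ with Young's inequality applied at the conjugate exponents $1/(1-\theta)$ and $1/\theta$ gives $\|\nabla u\|_\infty\leq C\Psi+C\|\nabla^2u\|_q$, so integrating in time reduces this quantity to the estimate of the previous step plus the trivial $\int_0^T\Psi\,dt$.

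Summing the two bounds and using $\Psi\ge1$ to absorb $\Psi$, $\Psi^{10}$, $\Psi^{16}$ and $\Psi^{\alpha_1}$ into a single $\Psi^{\alpha_2}$ with $\alpha_2=\max\{16,\alpha_1\}$ yields the claim. There is no genuine obstacle here; the only place a hypothesis of the paper is actively used is the integrability of $t^{-(5q-6)/(4q)}$ at the origin, which is exactly why the restriction $q\in(3,6)$ is imposed throughout.
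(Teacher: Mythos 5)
Your argument is correct and is essentially the same as the paper's: interpolate $\|\nabla u\|_\infty\leq C\|\nabla u\|_2^{1-\theta}\|\nabla^2u\|_q^{\theta}$ with $\theta=\frac{3q}{5q-6}$, reduce everything to the third estimate of Proposition \ref{Prop2}, and integrate in time using Propositions \ref{Prop3} and \ref{Prop4} together with the integrability of $t^{-\frac{5q-6}{4q}}$ for $q\in(3,6)$. No substantive difference from the paper's proof.
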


\begin{proof}
Noticing that $t^{-\frac{5q-6}{4q}}\in(0,1)$, for $q\in(3,6)$, and recalling the following estimate by Proposition \ref{Prop2}
  $$
  \|\nabla^2u\|_q
     \leq  C(\|\sqrt t\nabla u_t\|_2^2+\|\sqrt\rho u_t\|_2^2+t^{-\frac{5q-6}{4q}}+\Psi^{\alpha_1}),
  $$
  it follows from the Gagliardo-Nirenberg and Young inequalities and Propositions \ref{Prop3} and \ref{Prop4} that
  \begin{eqnarray*}
    \int_0^T\|\nabla u\|_\infty dt&\leq&C\int_0^T\|\nabla u\|_2^{1-\theta}\|\nabla^2u\|_q^\theta dt \\
&\leq& C\int_0^T(\|\nabla u\|_2+\|\nabla^2u\|_q)dt \\
    &\leq&  C\int_0^T(\|\sqrt t\nabla u_t\|_2^2+\|\sqrt\rho u_t\|_2^2+t^{-\frac{5q-6}{4q}}+\Psi^{\alpha_1})dt\\
&\leq& C+C\int_0^T\Psi^{\alpha_2}dt,
  \end{eqnarray*}
where $\theta=\frac{3q}{5q-6}\in(0,1)$, proving the conclusion.
\end{proof}

\begin{proposition}
\label{Prop6}
The following estimate holds
$$
\sup_{0\leq t\leq T}(\|\rho\|_\infty+\|P\|_\infty)\leq C\exp\left(C\int_0^T\Psi^{\alpha_2}dt\right),
$$
where $\alpha_2$ is the number in Proposition \ref{Prop5}.
\end{proposition}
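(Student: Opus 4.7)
The plan is to exploit the transport structure of the continuity equation and the pressure equation. Since $\rho_t+u\cdot\nabla\rho+\rho\,\text{div}\,u=0$ and, by (\ref{EQP}), $P_t+u\cdot\nabla P+\gamma\,\text{div}\,u\,P=0$, both $\rho$ and $P$ are transported and attenuated only through the compression factor $\text{div}\,u$. So along the particle trajectories $\dot X(t;x_0)=u(X,t)$, $X(0;x_0)=x_0$, one has
$$
\frac{d}{dt}\rho(X,t)=-\rho(X,t)\,\text{div}\,u(X,t),\qquad \frac{d}{dt}P(X,t)=-\gamma P(X,t)\,\text{div}\,u(X,t),
$$
which immediately integrate to
$$
\rho(X,t)=\rho_0(x_0)\exp\!\left(-\int_0^t\text{div}\,u(X,s)\,ds\right),
$$
and similarly for $P$. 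Since the regularities in Proposition \ref{Prop1} guarantee $\nabla u\in L^1(0,T;L^\infty)$ once Proposition \ref{Prop5} is in hand, the flow $X$ is well-defined and the map $x_0\mapsto X(t;x_0)$ is a bi-Lipschitz homeomorphism of $\mathbb{R}^3$, so that taking the supremum over $x_0$ recovers $\|\rho(t)\|_\infty$ and $\|P(t)\|_\infty$.

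Taking $L^\infty$ norms in the representation formulas then yields
$$
\sup_{0\leq t\leq T}\bigl(\|\rho(t)\|_\infty+\|P(t)\|_\infty\bigr)\leq (\|\rho_0\|_\infty+\|P_0\|_\infty)\exp\!\left(C\int_0^T\|\nabla u\|_\infty\,dt\right),
$$
where I used $|\text{div}\,u|\leq C|\nabla u|$ pointwise. Plugging in the bound from Proposition \ref{Prop5},
$$
\int_0^T\|\nabla u\|_\infty\,dt\leq C+C\int_0^T\Psi^{\alpha_2}\,dt,
$$
and absorbing the constant factor $e^C$ and the prefactor $(\|\rho_0\|_\infty+\|P_0\|_\infty)\leq C\Psi_0^{\cdot}$ into the generic $C$, the conclusion follows.

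If one prefers to avoid the characteristic argument, the same estimate can be obtained at the PDE level by the standard $L^p$-limit trick: multiply the continuity equation by $p\rho^{p-1}$ and integrate to get $\frac{d}{dt}\|\rho\|_p^p\leq (p-1)\|\text{div}\,u\|_\infty\|\rho\|_p^p$, apply Gronwall, and let $p\to\infty$ (and the analogous computation for $P$). I do not anticipate any genuine obstacle here: the only point to verify carefully is that $\|\nabla u\|_\infty$ is integrable in $t$ so that the flow (equivalently, the Gronwall argument) is legitimate, and this is exactly what Proposition \ref{Prop5} provides.
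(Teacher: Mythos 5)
Your argument is correct and is essentially the paper's own proof: the paper likewise integrates the continuity equation along the flow $X(t;x)$ to get the representation $\rho(X(t;x),t)=\rho_0(x)\exp\bigl(-\int_0^t\mathrm{div}\,u\,d\tau\bigr)$, verifies that $X(t;\cdot)$ maps $\mathbb{R}^3$ onto $\mathbb{R}^3$, and then invokes the bound $\int_0^T\|\nabla u\|_\infty\,dt\leq C+C\int_0^T\Psi^{\alpha_2}dt$ from Proposition \ref{Prop5} (the paper handles $P$ via $P=a\rho^\gamma$ rather than transporting it separately, which is an immaterial difference, and your $L^p\to\infty$ alternative is also fine).
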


\begin{proof}
Define $X(t;x)$ as
\begin{equation*}\label{X}
  \left\{
    \begin{array}{ll}
      \frac{d}{dt}X(t;x)=u(X(t;x),t), & \\
      X(0;x)=x. &
    \end{array}
  \right.
\end{equation*}
One can show that for any $t\in(0,T)$, and for any $y\in\mathbb R^3$, there is a unique $x\in\mathbb R^3$, such that $X(t;x)=y$, and, in particular,
$X(t;\mathbb R^3)=\mathbb R^3$; in fact, to show this, it suffices to consider the backward problem
$\frac{d}{dt}Z(t)=u(Z(t),t), X(T;x)=y.$
Then, by (\ref{rho}), it has
\begin{eqnarray*}
\frac{d}{dt}\rho(X(t;x),t)&=&\partial_t\rho(X(t;x),t)+u(X(t;x),t)\cdot \nabla \rho(X(t;x),t)\\
&=&-\text{div}\, u(X(t;x),t) \rho(X(t;x),t),
\end{eqnarray*}
and, thus,
\begin{equation}
\label{exprho}
 \rho(X(t;x),t)=\rho_0(x)\exp\left(-\int_0^t\text{div}\, u(X(\tau;x),\tau)d\tau\right).
\end{equation}
Therefore,
\begin{eqnarray*}
\|\rho\|_\infty(t)&=&\|\rho(X(t;x),t)\|_\infty(t)\\
&\leq&\|\rho_0\|_\infty\exp\left(\int_0^T\|\nabla u\|_\infty dt\right),
\end{eqnarray*}
and the conclusion follows by applying Proposition \ref{Prop4}.
\end{proof}

\begin{proposition}
\label{Prop7}
The following estimate holds
  $$
 \sup_{0\leq t\leq T}(\|\nabla P\|_2+||\nabla P||_q)
 \leq C \exp\left(C\int_0^T\Psi^{\alpha_2}dt\right),\quad q\in(3,6),
$$
where $\alpha_2$ is the number in Proposition \ref{Prop5}.
\end{proposition}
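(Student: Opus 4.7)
The plan is to close the $W^{1,r}$ regularity of $P$ for $r\in\{2,q\}$ by propagating it through the transport equation (\ref{EQP}). First, apply $\partial_i$ to (\ref{EQP}), rewritten as $P_t+(u\cdot\nabla)P+\gamma P\,\text{div}\,u=0$, to obtain
$$\partial_t\partial_iP+(u\cdot\nabla)\partial_iP=-\partial_iu\cdot\nabla P-\gamma\,\text{div}\,u\,\partial_iP-\gamma P\,\partial_i\text{div}\,u.$$
Multiplying by $|\nabla P|^{r-2}\partial_iP$, summing over $i$, integrating over $\mathbb{R}^3$, and using integration by parts on the convection term to rewrite $\int(u\cdot\nabla)|\nabla P|^r/r\,dx$ as $-\frac1r\int\text{div}\,u\,|\nabla P|^r dx$, one gets after dividing by $\|\nabla P\|_r^{r-1}$ the scalar inequality
$$\frac{d}{dt}\|\nabla P\|_r\leq C\|\nabla u\|_\infty\|\nabla P\|_r+C\|P\|_\infty\|\nabla^2u\|_r.$$

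Gronwall's inequality then yields, for both $r=2$ and $r=q$,
$$\|\nabla P\|_r(t)\leq\exp\!\left(C\!\int_0^T\!\|\nabla u\|_\infty d\tau\right)\!\left(\|\nabla P_0\|_r+C\sup_{[0,T]}\|P\|_\infty\!\int_0^T\!\|\nabla^2u\|_r d\tau\right).$$
The exponential factor is bounded by $C\exp(C\int_0^T\Psi^{\alpha_2}d\tau)$ via Proposition~\ref{Prop5}, and $\sup_{[0,T]}\|P\|_\infty$ enjoys the same bound via Proposition~\ref{Prop6}, so it can be pulled out of the time integral. It therefore remains to bound $\int_0^T\|\nabla^2u\|_r d\tau$ by $C+C\int_0^T\Psi^{\alpha_2}d\tau$ for each $r$. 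For $r=q$ this is immediate from Proposition~\ref{Prop5}. For $r=2$ this integral is not directly supplied, so I would combine the bound $\|\nabla^2u\|_2\leq C(\Psi^5+\Psi^{1/2}\|\sqrt\rho u_t\|_2)$ from Proposition~\ref{Prop2} with Cauchy-Schwarz and the estimate $\int_0^T\|\sqrt\rho u_t\|_2^2 d\tau\leq C+C\int_0^T\Psi^{10}d\tau$ of Proposition~\ref{Prop3} to obtain $\int_0^T\|\nabla^2u\|_2 d\tau\leq C+C\int_0^T\Psi^{\alpha_2}d\tau$, since $\alpha_2\geq16$ dominates every exponent of $\Psi$ appearing along the way. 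Finally, consolidating polynomial and exponential growth via $C+C\int_0^T\Psi^{\alpha_2}d\tau\leq C\exp(C\int_0^T\Psi^{\alpha_2}d\tau)$ produces the desired bound.

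I do not anticipate any serious obstacle. The only steps that require care are the routine $L^r$ energy identity for $\nabla P$, where one must check that integration by parts on the convection term really turns the worst gradient terms into harmless $\|\nabla u\|_\infty\|\nabla P\|_r^r$ contributions, and the bookkeeping of $\Psi$ exponents in the $r=2$ case; both are straightforward once Propositions~\ref{Prop2}--\ref{Prop6} are available. The decisive point that makes the scheme work is that Proposition~\ref{Prop6} provides a uniform-in-time bound on $\|P\|_\infty$, so it acts as a pure multiplicative constant of size $C\exp(C\int_0^T\Psi^{\alpha_2}d\tau)$ rather than a quantity that needs to be re-estimated pointwise inside the time integral.
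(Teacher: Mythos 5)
Your proposal is correct and follows essentially the same route as the paper: differentiate the pressure transport equation, perform the $L^r$ energy estimate to get $\frac{d}{dt}\|\nabla P\|_r\leq C(\|\nabla u\|_\infty\|\nabla P\|_r+\|P\|_\infty\|\nabla^2u\|_r)$, apply Gronwall, and then close the case $r=q$ via Proposition \ref{Prop5} and the case $r=2$ via Propositions \ref{Prop2} and \ref{Prop3} (Cauchy inequality), with $\|\nabla u\|_\infty$ and $\|P\|_\infty$ controlled by Propositions \ref{Prop5} and \ref{Prop6} and the final consolidation using $1+z\leq e^z$. This matches the paper's proof, so no further comment is needed.
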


\begin{proof}
From (\ref{EQP}), one has
\begin{equation*}\label{grade p}
  \partial_t \nabla P +\gamma \text{div}\,  u  \nabla P +\gamma P  \nabla \text{div}\, u+(u\cdot\nabla)\nabla P + \nabla P\nabla u  =0.
\end{equation*}
Multiplying the above by $|\nabla P|^{p-2}\nabla P $, integrating over $\mathbb{R}^3$, one has
\begin{eqnarray*}
% \nonumber to remove numbering (before each equation)
  \frac{d}{dt}\|\nabla P\|_p^p  \leq   C  (||\nabla u||_{\infty}||\nabla P||_{p}^p +||P||_{\infty}||\nabla ^2 u||_{p}||\nabla P||_{p}^{p-1}),
\end{eqnarray*}
which gives
$$ \frac{d}{dt}||\nabla P||_{p} \leq   C ( ||\nabla u||_{\infty}||\nabla P||_{p} +||P||_{\infty}||\nabla ^2 u||_{p}).$$
By the Gronwall inequality, one has
\begin{equation}\label{grade-P}
  \begin{split}
     \sup_{0\leq t\leq T}||\nabla P||_{p} \,\leq&\,
     C\left(\|\nabla P_0\|_p+\int_0^T\|P\|_\infty\|\nabla^2u\|_pdt\right)\exp\left(C\int_0^T\|\nabla u\|_\infty dt\right)\\
     \leq&C\left(1+\int_0^T\|\nabla^2u\|_pdt\right)\exp\left(C\int_0^T\Psi^{\alpha_2}dt\right).
  \end{split}
\end{equation}
Thanks to the above, it follows from Proposition \ref{Prop5} and Proposition \ref{Prop6} that
\begin{eqnarray*}
 \sup_{0\leq t\leq T}||\nabla P||_q&\leq& C\left(1+\int_0^T\Psi^{\alpha_2}dt\right)\exp\left(C\int_0^T\Psi^{\alpha_2}dt\right)\\
 &\leq& C \exp\left(C\int_0^T\Psi^{\alpha_2}dt\right),
\end{eqnarray*}
where we have used the fact that $e^z\geq 1+z$ for $z\geq0$. By Proposition \ref{Prop2} and Proposition \ref{Prop3}, it follows from
(\ref{grade-P}) and the Cauchy inequality that
\begin{eqnarray*}
  \sup_{0\leq t\leq T}||\nabla P||_2&\leq&C\left[1+\int_0^T(\Psi^5+\Psi^{\frac12}\|\sqrt\rho u_t\|_2)dt\right]\exp\left(C\int_0^T\Psi^{\alpha_2}dt\right)\\
  &\leq&C\left[1+\int_0^T(\Psi^5+ \|\sqrt\rho u_t\|_2^2)dt\right]\exp\left(C\int_0^T\Psi^{\alpha_2}dt\right)\\
  &\leq&C\left(1+\int_0^T\Psi^{10}dt\right)\exp\left(C\int_0^T\Psi^{\alpha_2}dt\right) \\
&\leq& C\exp\left(C\int_0^T\Psi^{\alpha_2}dt\right).
\end{eqnarray*}
This proves the conclusion.
\end{proof}

\begin{proposition}
  \label{Prop8}
The following estimates hold
$$
\sup_{0\leq t\leq T}(\|\rho-\rho_\infty\|_2+\|\nabla\rho\|_2+\|\nabla\rho\|_q)\leq C \exp\left(C\int_0^T\Psi^{\alpha_2}dt\right),\quad q\in(3,6),
$$
with constant $C$ depending also on $\|\rho_0-\rho_\infty\|_2+\|\nabla\rho_0\|_2+\|\nabla\rho_0\|_q$,
and
$$
\sup_{0\leq t\leq T}\|\sqrt t\nabla^2u\|_2^2\leq C\sup_{0\leq t\leq T}(\Psi^{10}+\Psi\|\sqrt t\sqrt\rho u_t\|_2^2),
$$
where $\alpha_2$ is the number in Proposition \ref{Prop5}.
\end{proposition}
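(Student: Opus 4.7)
The plan is to split the statement into two independent pieces: the transport-type estimates on $\rho-\rho_\infty$ and $\nabla\rho$ in $L^2\cap L^q$, which are handled by mimicking Propositions \ref{Prop3} and \ref{Prop7} with the pressure $P$ replaced by the density $\rho$; and the $t$-weighted $L^2$ bound on $\nabla^2u$, which is an immediate consequence of Proposition \ref{Prop2}.

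For the $L^2$ bound on $\rho-\rho_\infty$, I would rewrite the continuity equation (\ref{rho}) as
\begin{equation*}
(\rho-\rho_\infty)_t + u\cdot\nabla(\rho-\rho_\infty) + (\rho-\rho_\infty)\,\text{div}\,u + \rho_\infty\,\text{div}\,u = 0,
\end{equation*}
test against $\rho-\rho_\infty$, integrate the convective term by parts, and arrive at
$\frac{d}{dt}\|\rho-\rho_\infty\|_2^2\leq C\|\nabla u\|_\infty\|\rho-\rho_\infty\|_2^2+C\|\nabla u\|_2\|\rho-\rho_\infty\|_2$. Gronwall, combined with Propositions \ref{Prop3} and \ref{Prop5}, then produces the stated exponential bound. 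For $\nabla\rho$ in $L^p$ with $p\in\{2,q\}$, I differentiate (\ref{rho}) spatially to obtain
\begin{equation*}
\partial_t\nabla\rho+(u\cdot\nabla)\nabla\rho+\nabla u\cdot\nabla\rho+\text{div}\,u\,\nabla\rho+\rho\,\nabla\text{div}\,u = 0,
\end{equation*}
test against $|\nabla\rho|^{p-2}\nabla\rho$, and integrate the transport term by parts, producing
$\frac{d}{dt}\|\nabla\rho\|_p\leq C\|\nabla u\|_\infty\|\nabla\rho\|_p+C\|\rho\|_\infty\|\nabla^2u\|_p$. This is structurally identical to the inequality driving (\ref{grade-P}) in the proof of Proposition \ref{Prop7}, so the same Gronwall argument yields the claim: for $p=q$ I invoke Propositions \ref{Prop5} and \ref{Prop6} to bound $\int_0^T\|\rho\|_\infty\|\nabla^2u\|_q\,dt$, and for $p=2$ I use Propositions \ref{Prop2} and \ref{Prop3} to control $\int_0^T\|\nabla^2u\|_2\,dt$.

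For the weighted bound on $\nabla^2u$, multiplying the first conclusion of Proposition \ref{Prop2} by $t$ gives
\begin{equation*}
\|\sqrt t\,\nabla^2u\|_2^2\leq Ct\Psi^{10}+C\Psi\,\|\sqrt t\sqrt\rho u_t\|_2^2,
\end{equation*}
and absorbing the bounded factor $t\leq T$ into the constant $C$ (which the section convention already allows) and taking the supremum over $t\in[0,T]$ yields the claimed inequality at once.

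I do not expect any substantial obstacle here: both halves are direct adaptations of arguments already carried out in the excerpt. The one point that deserves care is verifying that the integration by parts on $(u\cdot\nabla)\nabla\rho$ tested against $|\nabla\rho|^{p-2}\nabla\rho$ yields only a $\text{div}\,u\,|\nabla\rho|^p$ term via $\int u\cdot\nabla|\nabla\rho|^p\,dx = -\int\text{div}\,u\,|\nabla\rho|^p\,dx$, so that no second-order derivatives of $\rho$ appear and the estimate stays closed in $\|\nabla\rho\|_p$.
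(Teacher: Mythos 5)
Your proposal is correct and follows essentially the same route as the paper, whose proof of Proposition \ref{Prop8} is exactly the pointer you flesh out: redo the $\|P-P_\infty\|_2$-type estimate of Proposition \ref{Prop3} and the Gronwall argument of Proposition \ref{Prop7} with $P$ replaced by $\rho$ (closing the time integrals via Propositions \ref{Prop2}, \ref{Prop3}, \ref{Prop5}, \ref{Prop6}), and get the weighted bound by multiplying the first estimate of Proposition \ref{Prop2} by $t$. One small caveat: the section's convention makes $C$ independent of $T$, so absorbing $t\le T$ into $C$ is strictly a gloss --- but it is the same gloss implicit in the paper's own one-line argument (and harmless where the proposition is used, since there $T\lesssim 1$), so it does not amount to a genuine gap.
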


\begin{proof}
The estimate of $\|\rho-\rho_\infty\|_2$ follows in the same way as that for $\|P-P_\infty\|_2$ in Proposition \ref{Prop3}, while those for
$\|\nabla\rho\|_2$ and $\|\nabla\rho\|_q$ can be proved similarly as in Proposition \ref{Prop7}. The conclusion for $\|\sqrt t\nabla^2u\|_2^2$ follows from combining Propositions \ref{Prop2} and \ref{Prop4}.
\end{proof}

We end up this section with the following proposition on the lifespan estimate and a priori estimates.

\begin{proposition}
  \label{Prop9}
Assume in addition to the conditions in Proposition \ref{Prop1} that $\rho\geq\underline\rho$ for some positive constant $\underline\rho$.

Then, there are two positive constants $\mathcal T$ and $C$
depending only on $\mu$, $\lambda$, $a$, $\gamma$, $q$, and the upper bound of
$\psi_0:=\|\rho_0\|_\infty+\|\rho_0-\rho_\infty\|_2+\|\nabla\rho_0\|_{L^2\cap L^q}+\|\nabla u_0\|_2,$
and, in particular, independent of $\underline\rho$ and
$\|\nabla^2u_0\|_2$, such that system (\ref{u})--(\ref{rho}), subject to (\ref{u-bd1}), has a unique solution $(\rho, u)$ on $\mathbb R^3\times(0,\mathcal T)$, enjoying the regularities stated in Proposition \ref{Prop1}, with $T_*$ there replaced by $\mathcal T$, and the following a priori estimates
\begin{eqnarray*}
  \sup_{0\leq t\leq\mathcal T}(\|\rho\|_\infty+\|\rho-\rho_\infty\|_2+\|\nabla\rho\|_2+\|\nabla\rho\|_q+\|\rho_t\|_2)&\leq& C, \\
  \sup_{0\leq t\leq\mathcal T} \|\nabla u\|_2^2 +\int_0^\mathcal T(\|\nabla^2u\|_2^2+\|\sqrt\rho u_t\|_2^2)dt&\leq& C,  \\
  \sup_{0\leq t\leq\mathcal T}(\|\sqrt t\sqrt\rho u_t\|_2^2+\|\sqrt t\nabla^2u\|_2^2)
  +\int_0^\mathcal T(\|\sqrt t\nabla u_t\|_2^2+\|\sqrt t\nabla^2u\|_q^2)dt&\leq& C.
\end{eqnarray*}
\end{proposition}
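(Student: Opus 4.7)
The proof consists of four pieces: (i) combine Propositions~\ref{Prop3}, \ref{Prop4}, \ref{Prop6}, \ref{Prop7} into a single closed integral inequality for $\Psi$; (ii) extract from it a uniform lifespan $\mathcal{T}$ depending only on $\psi_0$; (iii) extend the Proposition~\ref{Prop1} solution up to $\mathcal{T}$ by continuation; (iv) read off the a priori estimates in the statement.

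For (i)--(ii), Propositions~\ref{Prop6} and \ref{Prop7} bound $\|\rho\|_\infty$, $\|P\|_\infty$, $\|\nabla P\|_{L^2\cap L^q}$ by $C\exp(C\int_0^t\Psi^{\alpha_2}\,ds)$, while Propositions~\ref{Prop3} and \ref{Prop4} (together with the intermediate estimate $\|P-P_\infty\|_2^2\leq C+C\int_0^t\Psi^3\,ds$ derived inside the proof of Proposition~\ref{Prop3}) give
$$
\|\nabla u\|_2^2+\|\sqrt t\sqrt\rho u_t\|_2^2+\|P-P_\infty\|_2^2\ \leq\ C+C\!\int_0^t\Psi^{16}(s)\,ds.
$$
Summing and using $\alpha_2\geq 16$ together with $1+z\leq e^z$, I obtain the self-consistent inequality
$$
\Psi(t)\ \leq\ C_0\exp\!\Bigl(C_0\!\int_0^t\Psi^{\alpha_2}(s)\,ds\Bigr),\qquad 0\leq t\leq T,
$$
with $C_0$ depending only on $\mu,\lambda,a,\gamma,q,\psi_0$. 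Setting $F(t):=\int_0^t\Psi^{\alpha_2}$ turns this into $F'\leq C_0^{\alpha_2}e^{C_0\alpha_2 F}$, which integrates explicitly to
$$
F(t)\ \leq\ -\tfrac{1}{C_0\alpha_2}\log\!\bigl(1-C_0^{\alpha_2+1}\alpha_2\,t\bigr),
$$
so that $\Psi(t)\leq 2C_0$ on some interval $[0,\mathcal T]$ whose length depends only on $\psi_0$ and the fixed parameters.

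For (iii), let $T_{\max}$ denote the maximal existence time in the class of Proposition~\ref{Prop1}, and suppose for contradiction $T_{\max}<\mathcal T$. Pick $t_0\in(0,T_{\max})$ close to $T_{\max}$. The regularity class of Proposition~\ref{Prop1} yields $(\rho(t_0)-\rho_\infty,u(t_0))\in(H^1\cap W^{1,q})\times(D_0^1\cap D^2)$; the flow formula~\eqref{exprho} combined with $\int_0^{T_{\max}}\|\nabla u\|_\infty\,ds<\infty$ (from Proposition~\ref{Prop5} applied to $\Psi\leq 2C_0$) gives $\rho(t_0)\geq\underline\rho\,e^{-C}>0$; and the compatibility condition at $t_0$ is automatic from the equation, with $g=\sqrt\rho u_t(t_0)+\sqrt\rho(u\cdot\nabla u)(t_0)\in L^2$ for a.e.\ $t_0$ by Proposition~\ref{Prop1}'s regularity. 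Reapplying Proposition~\ref{Prop1} from initial time $t_0$ then extends the solution past $T_{\max}$, contradicting maximality. Crucially, the constants in (i)--(ii) depend on neither $\underline\rho$ nor $\|\nabla^2u_0\|_2$, and so does $\mathcal T$.

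The main obstacle is the closure in step~(i): one has to reconcile the exponential-Gronwall growth produced by the transport estimates of Propositions~\ref{Prop6} and \ref{Prop7} with the polynomial growth of the energy estimates in Propositions~\ref{Prop3} and \ref{Prop4}, and verify that the assembled differential inequality still delivers a lifespan controlled by $\psi_0$ alone; this is what makes the continuation in (iii) work uniformly in $\underline\rho$ and $\|\nabla^2u_0\|_2$. Once $\Psi\leq 2C_0$ on $[0,\mathcal T]$ is known, the a priori estimates of the statement follow directly: the first line from Propositions~\ref{Prop6}, \ref{Prop7}, \ref{Prop8} together with $\|\rho_t\|_2\leq\|\rho\|_\infty\|\nabla u\|_2+\|u\|_6\|\nabla\rho\|_3$ from~\eqref{rho}; the second from Propositions~\ref{Prop2} and \ref{Prop3}; and the third from Propositions~\ref{Prop2}, \ref{Prop4}, \ref{Prop8}, where $\int_0^{\mathcal T}\|\sqrt t\nabla^2u\|_q^2\,dt$ is handled by inserting the bounds on $\|\rho u\cdot\nabla u\|_q$ and $\|\rho u_t\|_q$ from the proof of Proposition~\ref{Prop2} into the elliptic estimate and absorbing the singular factor $t^{-(5q-6)/(4q)}$ in time by H\"older, which is integrable on $(0,\mathcal T)$ thanks to $q<6$.
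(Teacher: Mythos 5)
Your steps (i), (ii) and most of (iv) follow the paper's own route: the same four propositions are combined into $\Psi(t)\leq C_0\exp\bigl(C_0\int_0^t\Psi^{\alpha_2}ds\bigr)$, and solving this integral inequality gives exactly the paper's bound $\Psi\leq \mathrm{const}$ on an interval whose length depends only on $\psi_0$ and the fixed parameters. The genuine gap is in your continuation step (iii). You restart Proposition \ref{Prop1} from a time $t_0<T_{\max}$ and assert that this ``extends the solution past $T_{\max}$''. But Proposition \ref{Prop1}, as stated, only furnishes \emph{some} existence time $T_*>0$ with unspecified dependence on the data; nothing rules out that the local existence times starting from $t_0$ shrink to zero as $t_0\uparrow T_{\max}$, so no contradiction with maximality follows. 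To close this you would need a quantitative lower bound on $T_*$ in terms of norms of the data (uniform over $t_0$ near $T_{\max}$), which the cited proposition does not provide. The paper avoids this by a different maneuver: using the uniform estimates already established on $(0,T_{\max})$ (boundedness of $\Psi$, of $\|\sqrt t\nabla^2u\|_2$, of $\int_0^t\|\nabla u\|_\infty d\tau$, and the lower bound $\rho\geq C\underline\rho$ from \eqref{exprho}), it extends $(\rho,u)$ continuously to the \emph{closed} time $T_{\max}$, checks that $(\rho-\rho_\infty)(T_{\max})\in H^1\cap W^{1,q}$, $u(T_{\max})\in D_0^1\cap D^2$, and that $\rho(T_{\max})$ is bounded away from zero so the compatibility condition holds at $T_{\max}$ itself (your formula for $g$ is correct up to sign, or one may simply divide the elliptic part by $\sqrt\rho$), and only then applies Proposition \ref{Prop1} with initial time $T_{\max}$ to go strictly beyond it. The paper also uses the uniqueness result to glue solutions in the definition of $T_{\max}$, a point you pass over silently.

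A secondary caution concerns $\int_0^{\mathcal T}\|\sqrt t\nabla^2u\|_q^2dt\leq C$. If you use the third estimate of Proposition \ref{Prop2} as stated (the one containing $\|\sqrt t\nabla u_t\|_2^2$ and $t^{-\frac{5q-6}{4q}}$), multiply by $\sqrt t$ and square, you generate a term $t\|\sqrt t\nabla u_t\|_2^4$, which the available bounds do not control ($\|\sqrt t\nabla u_t\|_2$ is only known in $L^2_t$, not $L^\infty_t$); the integrability of $t^{1-\frac{5q-6}{2q}}$ is not the issue. The correct route, which the paper takes and which your phrase ``inserting the bounds from the proof of Proposition \ref{Prop2}'' should be made to mean precisely, is to go back to \eqref{LQ1}--\eqref{LQ2} and apply Young's inequality so that only first powers of the $t$-weighted quantities appear, namely $\|\nabla^2u\|_q\leq C(1+\|\nabla^2u\|_2^2+\|\sqrt\rho u_t\|_2+\|\nabla u_t\|_2)$; then $\|\sqrt t\nabla^2u\|_q^2\leq C\bigl(1+\|\nabla^2u\|_2^2\|\sqrt t\nabla^2u\|_2^2+\|\sqrt\rho u_t\|_2^2+\|\sqrt t\nabla u_t\|_2^2\bigr)$, and each term is integrable on $(0,\mathcal T)$ by the estimates already obtained, with no singular factor in $t$ appearing at all.
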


\begin{proof}
Define the maximal time $T_\text{max}$ as
$$
T_\text{max}:=\max\{T\in\mathscr T\},
$$
where
\begin{eqnarray*}
\mathscr T&:=&\{T\in[T_*,\infty)~|~\text{There is a solution }(\rho, u)\mbox{ in the class }\mathscr X_T\\
&& \text{ to system }(\ref{u})-(\ref{rho}),\text{ subject to } (\ref{u-bd1}), \text{ on }\mathbb R^3\times(0,T)\},
\end{eqnarray*}
where $\mathscr X_T$ is the class of $(\rho, u)$ enjoying the regularities as stated in Proposition \ref{Prop1}, with $T_*$ there replaced with $T$.
By Proposition \ref{Prop1}, it is clear that $T^\text{max}$ is well defined and $T^\text{max}\geq T_*$.
Moreover, by the uniqueness result, see the proof of the uniqueness part of Theorem \ref{keyth} in the next section,
one can easily show that any two solutions $(\bar\rho, \bar u)$ and $(\tilde\rho,\tilde u)$ to system (\ref{u})--(\ref{rho}), subject to
(\ref{u-bd1}), on $\mathbb R^3\times(0,\bar T)$ and on $\mathbb R^3\times(0,\tilde T)$, respectively, coincide on $\mathbb R^3\times(0,\min\{\bar T,
\tilde T\})$. Choose $T_k\in\mathscr T$ with $T_k\uparrow T^\text{max}$ as $k\uparrow\infty$. By definition of $\mathscr T$, there is a solution
$(\rho_k, u_k)$ to system (\ref{u})--(\ref{rho}), subject to (\ref{u-bd1}), on $\mathbb R^3\times(0, T_k)$. Define $(\rho, u)$ on
$\mathbb R^3\times(0,T^\text{max})$ as
$$
(\rho, u)(x,t)=(\rho_k,u_k)(x,t), \quad x\in\mathbb R^3, t\in(0,T_k), k=1,2,\cdots.
$$
Applying the uniqueness result again, the definition of $(\rho, u)$ is independent of the choice of the sequence $\{T_k\}_{k=1}^\infty$. By the construction of $(\rho, u)$, one can verify that $(\rho, u)$ is a solution to (\ref{u})--(\ref{rho}), subject to (\ref{u-bd1}), on $\mathbb R^3\times(0, T^\text{max})$, and $(\rho, u)\in\mathscr X_T$, for any $T\in(0,T^\text{max})$.

By Propositions \ref{Prop3}, \ref{Prop4}, \ref{Prop6}, and \ref{Prop7}, it is clear
$$
\Psi(t)\leq C_m\exp\left(C_m\int_0^t\Psi^{\alpha_2}d\tau\right),\quad t\in(0,T^{\text{max}}),
$$
where $C_m$ is a positive constant depending only on $\mu$, $\lambda$, $a$, $\gamma$, $q$, and the upper bound of
$\psi_0$. Here we have used the fact that $\Psi_0$ can be controlled by $\psi_0$.
One can easily derive from the above inequality that
\begin{equation}
\label{ESTD}
{\Psi(t)\leq2^{\frac{1}{\alpha_2}} C_m,\quad \forall t\in\left(0,\min\left\{T^{\text{max}}, (2^{\alpha_2}C_m^{\alpha_2+1})^{-1}\right\}\right).}
\end{equation}
Thanks to the above estimate, one can get by applying Propositions \ref{Prop5} and \ref{Prop8} that
\begin{equation}
\label{ESTE}
  (\|\rho-\rho_\infty\|_2+\|\nabla\rho\|_2+\|\nabla\rho\|_q+\|\sqrt t\nabla^2u\|_2^2)(t)+\int_0^t\|\nabla u\|_\infty d\tau\leq C,
\end{equation}
 for any $t\in\left(0,\min\left\{T^{\text{max}}, {(2^{\alpha_2}C_m^{\alpha_2+1})^{-1}}\right\}\right)$, and for a positive constant $C$ depending only on $\mu$, $\lambda$, $a$, $\gamma$, $q$, and the upper bound of
$\psi_0$. Thanks to (\ref{ESTD})--(\ref{ESTE}) and using (\ref{rho}) one can further obtain
\begin{eqnarray}
  \nonumber\|\rho_t\|_2&=&\|u\cdot\nabla\rho+\text{div}\,u\rho\|_2\nonumber\\
&\leq&\|u\|_6\|\nabla\rho\|_3+\|\nabla u\|_2\|\rho\|_\infty\nonumber\\
  &\leq&C(1+\|\nabla\rho\|_{L^2\cap L^q})\|\nabla u\|_2\leq C, \label{ESTF}
\end{eqnarray}
for any $0<t<\min\{T^{\text{max}}, {(2^{\alpha_2}C_m^{\alpha_2+1})^{-1}}\}$. Using the estimate $\int_0^t\|\nabla u\|_\infty d\tau\leq C$ in (\ref{ESTE}) and recalling (\ref{exprho}), it is clear that
\begin{equation}
\label{ESTG}
  \rho(x,t)\geq C\underline\rho, \quad x\in\mathbb R^3, \quad 0<t<\min\left\{T^{\text{max}}, {(2^{\alpha_2}C_m^{\alpha_2+1})^{-1}}\right\}.
\end{equation}

We claim that $T^\text{max}>{(2^{\alpha_2}C_m^{\alpha_2+1})^{-1}}$. Assume in contradiction that this does not hold. Then, all the estimates in (\ref{ESTD})--(\ref{ESTG}) hold for any $t\in(0,T^\text{max})$. Estimates (\ref{ESTD})--(\ref{ESTG}),
holding on time interval $(0,T^\text{max})$, guarantee that $(\rho, u)(\,\cdot\,, t)$
can be uniquely extended to time $T^\text{max}$, with $(\rho, u)(\,\cdot\,, T^\text{max})$ defined as the limit of $(\rho, u)(\,\cdot\,, t)$
as $t\uparrow T^\text{max}$, and that
$$(\rho-\rho_\infty)(\,\cdot\,,T^\text{max})\in H^1\cap W^{1,q},\quad u(\,\cdot\,,T^\text{max})\in D_0^1\cap D^2.$$
Thanks to this and recalling (\ref{ESTG}), it is clear that the compatibility condition holds at time $T^\text{max}$. Therefore, by the local
well-posedness result, i.e., Proposition \ref{Prop1}, one can further extend solution $(\rho, u)$ beyond the time $T^\text{max}$, which contradicts to the definition of $T^\text{max}$. This contradiction proves the claim that $T^\text{max}>  {(2^{\alpha_2}C_m^{\alpha_2+1})^{-1}}$.
As a result, one obtains a solution $(\rho, u)$ on time interval $(0,  {(2^{\alpha_2}C_m^{\alpha_2+1})^{-1}})$ satisfying all the a priori estimates in (\ref{ESTD})--(\ref{ESTF}), except $\int_0^\mathcal T\|\sqrt t\nabla^2u\|_q^2dt\leq C$, on the same time interval.

It remains to verify $\int_0^\mathcal T\|\sqrt t\nabla^2u\|_q^2dt\leq C$. To this end, recalling (\ref{LQ1}) and (\ref{LQ2}), it follows
from the elliptic estimate, the estimates just obtained, and the Young inequality that
\begin{eqnarray*}
  \|\nabla^2u\|_q&\leq&C(\|\rho u_t\|_q+\|\rho(u\cdot\nabla)u\|_q+\|\nabla P\|_q)\nonumber\\
  &\leq&C( \|\rho\|_\infty^{\frac{5q-6}{4q}}\|\sqrt\rho u_t\|_2^{\frac{6-q}{2q}}\|\nabla u_t\|_2^{\frac{3q-6}{2q}}+
  \|\rho\|_\infty\|\nabla u\|_2^{\frac3q}\|\nabla^2u\|_2^{\frac{2q-3}{q}}+\|\nabla\rho\|_q)\\
&\leq&C(1+\|\nabla^2u\|_2^2+\|\sqrt\rho u_t\|_2+\|\nabla u_t\|_2),
\end{eqnarray*}
and further that
$$
 \int_0^\mathcal T\|\sqrt t\nabla^2u\|_q^2dt\leq C\int_0^\mathcal T(1+\|\nabla^2u\|_2^2\|\sqrt t\nabla^2u\|_2^2
+\|\sqrt\rho u_t\|_2^2+\|\sqrt t\nabla u_t\|_2^2)dt\leq C,
$$
proving the conclusion.
\end{proof}

\section{Proof of Theorem \ref{keyth}}

This section is devoted to proving Theorem \ref{keyth}.

The following lemma, proved in \cite{LJK17}, will be used in proving the uniqueness.

\begin{lemma}
\label{uni-lem}
Given a positive time $T$ and nonnegative functions $f,g,G$ on $[0,T]$, with $f$ and $g$ being absolutely continuous on $[0,T]$. Suppose that
\begin{equation*}
\left\{
  \begin{array}{ll}
    \frac{d}{dt} f(t)\leq \delta (t)f(t) +A \sqrt{G(t)}, & \\
    \frac{d}{dt} g(t) +G(t) \leq \alpha(t)g(t) +\beta(t)f^2(t) , &  \\
    f(0)=0, &
  \end{array}
\right.
\end{equation*}
where $\alpha$, $\beta$ and $\delta$ are three nonnegative functions, with $\alpha,\delta,t\beta\in L^1((0,T))$.

Then, then following estimates hold
\begin{eqnarray*}
f(t)& \leq& AB \sqrt{tg(0)} \exp\left(\frac{1}{2} \int_0^t ( \alpha(s)+A^2B^2s\beta(s))ds\right), \\
g(t) +\int_0^tG(s)ds&\leq& g(0) \exp\left(\int_0^t (\alpha(s) +A^2B^2s\beta(s))ds\right),
\end{eqnarray*}
where $B=1+e^{\int_0^T\delta(\tau)d\tau}$. In particular, if $g(0)=0$, then $f\equiv g\equiv0$ on $(0,T)$.
\end{lemma}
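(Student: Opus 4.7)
The plan is to decouple the two differential inequalities: first extract a clean bound on $f$ in terms of $\int_0^t G(s)\,ds$ alone, then substitute into the second inequality to obtain a single scalar Gronwall estimate for the combination $\Phi:=g+\int_0^{\cdot}G\,ds$, and finally feed that back into the bound for $f$.

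\emph{Step 1.} Apply the integrating factor $e^{-\int_0^t\delta(\tau)d\tau}$ to the first inequality. Since $f(0)=0$, integration gives
$$f(t)\leq A\,e^{\int_0^t\delta}\int_0^t\sqrt{G(s)}\,ds.$$
Because $e^{\int_0^t\delta}\leq e^{\int_0^T\delta}\leq B$, Cauchy--Schwarz then yields
$$f^2(t)\leq A^2B^2\,t\int_0^tG(s)\,ds.$$
This is the key trade: the $\sqrt{G}$ nonlinearity becomes a quantity controlled by $\int_0^tG$ at the expense of an extra factor of $t$.

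\emph{Step 2.} Write $H(t):=\int_0^tG(s)\,ds$ and $\Phi(t):=g(t)+H(t)$, so that $\Phi(0)=g(0)$. Substituting the bound from Step~1 into the second inequality gives
$$\Phi'(t)\leq \alpha(t)g(t)+A^2B^2\,t\,\beta(t)H(t)\leq \bigl(\alpha(t)+A^2B^2\,t\,\beta(t)\bigr)\Phi(t).$$
The weight $\alpha+A^2B^2t\beta$ lies in $L^1(0,T)$ by hypothesis, so Gronwall's inequality gives the second claimed estimate
$$g(t)+\int_0^tG(s)\,ds\leq g(0)\exp\!\left(\int_0^t(\alpha(s)+A^2B^2\,s\,\beta(s))\,ds\right).$$

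\emph{Step 3.} Inserting this bound on $\int_0^tG$ into the inequality $f^2(t)\leq A^2B^2\,t\int_0^tG$ and taking square roots yields the first estimate. For the final assertion, observe that if $g(0)=0$ then the Gronwall bound forces $\Phi\equiv 0$, hence $g\equiv 0$ and $G\equiv 0$ almost everywhere; plugging $G\equiv 0$ back into the first inequality together with $f(0)=0$ and Gronwall gives $f\equiv 0$.

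The only real subtlety is reconciling the half-power $\sqrt{G}$ in the first inequality with the full $G$ on the left of the second. This is handled exactly by the Cauchy--Schwarz step, which produces the factor $t\beta(t)$ on the right-hand side of the inequality for $\Phi$; the hypothesis $t\beta\in L^1(0,T)$ is precisely what makes Gronwall applicable, and it is also what explains the appearance of $t$ inside the exponential in both conclusions.
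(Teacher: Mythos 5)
Your proof is correct: the integrating-factor step giving $f(t)\leq AB\int_0^t\sqrt{G(s)}\,ds$, the Cauchy--Schwarz trade $f^2(t)\leq A^2B^2\,t\int_0^tG(s)\,ds$, and the Gronwall estimate for $\Phi=g+\int_0^{\cdot}G\,ds$ (with the $L^1$ hypothesis on $\alpha$, $\delta$, $t\beta$ used exactly where needed) reproduce the intended argument. Note that the present paper does not reprove this lemma but quotes it from \cite{LJK17}, and your derivation is essentially the same as the one given there, so there is nothing to add.
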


We are now ready to prove Theorem \ref{keyth}.

\begin{proof}[\textbf{Proof of Theorem \ref{keyth}}]
We first prove the uniqueness and then the existence.

\textbf{Uniqueness:}
Let $(\check{\rho}, \check{u})$, $(\hat{\rho}, \hat{u})$ be two solutions of system (\ref{u})-(\ref{rho}), subject to (\ref{u-bd1}),
on $\mathbb R^3\times(0,T)$, satisfying the regularities stated in the theorem.
For $u\in\{\hat u,\check u\}$, by the Gagliardo-Nirenberg and H\"older inequalities, one has
\begin{eqnarray*}
  \int_0^T\|\nabla u\|_\infty dt&\leq&C\int_0^T\|\nabla u\|_2^{1-\theta}\|\nabla^2u\|_q^\theta dt\leq C\int_0^T\|\sqrt t\nabla^2u\|_q^\theta
t^{-\frac\theta2}dt\\
&\leq&C\left(\int_0^T\|\sqrt t\nabla^2u\|_q^2dt\right)^{\frac\theta2}\left(\int_0^Tt^{-\frac{\theta}{2-\theta}}dt\right)^{1-\frac\theta2}<\infty,
\end{eqnarray*}
where $\theta=\frac{3q}{5q-6}\in(0,1)$. Therefore, $\nabla\hat u, \nabla \check u\in L^1(0, T; L^\infty)$.

Denote
$$\sigma=\check{\rho}-\hat{\rho},\quad W=\check{P}-\hat{P},\quad v=\check{u}-\hat{u}.$$ Then, straightforward calculations show
\begin{eqnarray}
  \sigma_t+v\cdot \nabla\hat{\rho} +\check{u}\cdot \nabla \sigma+\text{div}\,\check{u} \sigma +\text{div}\,v\hat{\rho}=0,\label{DEQRHO}\\
  \hat{\rho}(v_t +\hat{u}\cdot\nabla v) -\mu\Delta v-(\lambda+\mu)\nabla \text{div}\, v+ \nabla W
      =  -\sigma\check{u}_t-\sigma\check{u}\cdot\nabla \check{u} -\hat{\rho}v\cdot\nabla \check{u},\label{DEQU}\\
W_t+v\cdot \nabla\hat{P} +\check{u}\cdot \nabla W+\gamma \text{div}\,\check{u} W +\gamma \text{div}\,v\hat{P}=0.\label{DEQW}
\end{eqnarray}

Testing (\ref{DEQRHO}) with $\sigma$ and using the H\"{o}lder inequality, we have
\begin{eqnarray*}
     \frac{d}{dt} \int  |\sigma|^{2}dx &\leq& C\int  (|v\cdot\nabla \hat{\rho}|\,|\sigma| +|\text{div}\check{u}||\sigma|^{2}+| \text{div}\,v\hat{\rho}|\,|\sigma|) \\
      &\leq& C(||\nabla \hat{\rho}||_{3}||\sigma||_{2}||v||_{6} +||\nabla \check{u}||_{\infty}||\sigma||_{{ 2}}^{ 2}) +C||\hat{\rho}||_{\infty}||\nabla v||_{2}||\sigma||_{{2}} \\
&\leq&C||\nabla \check{u}||_{\infty}||\sigma||_{{2}}^2+ C(\|\hat\rho\|_\infty+\|\nabla\hat\rho\|_3) ||\nabla v||_{2}||\sigma||_{{2}},
\end{eqnarray*}
and, thus,
\begin{equation}
  \label{USIGMA2}
 \frac{d}{dt}\|\sigma\|_2\leq C||\nabla \check{u}||_{\infty}||\sigma||_{{2}}+ C(\|\hat\rho\|_\infty+\|\nabla\hat\rho\|_3) ||\nabla v||_{2}.
\end{equation}
Similarly, by testing (\ref{DEQW}) with $W$ yields
\begin{equation}\label{UW}
  \frac{d}{dt}||W||_{2}\leq C||\nabla \check{u}||_{\infty}||W||_{2}+C (||\nabla \hat{P}||_{3}+||\hat{P}||_{\infty})||\nabla v||_{2}.
\end{equation}
Testing (\ref{DEQU}) with $v$ and using the H\"{o}lder and Young inequalities, we have
\begin{eqnarray}
\nonumber&&\frac12 \frac{d}{dt}\int \hat{\rho}|v|^2 dx +\int  [\mu|\nabla v|^2+(\lambda+\mu) (\text{div}\, v)^2]dx\\
\label{DENGYU}&\leq& \int (|W||\nabla v| + |\sigma|| \check{u}_t||v|+|\sigma||\check{u}||\nabla \check{u}||v|+|\hat{\rho}(v\cdot\nabla \check{u})\cdot v|)dx=:RHS.
\end{eqnarray}

We proceed the proof separately for the cases $\rho_\infty=0$ and $\rho_\infty>0$.

\textbf{Case I: }$\rho_\infty=0$.

By the H\"older, Sobolev, and Young inequalities, we can control $RHS$ as
\begin{eqnarray*}
  RHS&\leq&\|W\|_2\|\nabla v\|_2+\|\sigma\|_{\frac32}\|\check u_t\|_6\|v\|_6\\
&&+\|\sigma\|_2\|\check u\|_6\|\nabla\check u\|_6\|v\|_6
+\|\nabla\check u\|_\infty\|\sqrt{\hat\rho}v\|_2^2\nonumber\\
&\leq&C(\|W\|_2\|\nabla v\|_2+\|\sigma\|_{\frac32}\|\nabla\check u_t\|_2\|\nabla v\|_2\nonumber\\
&&+\|\sigma\|_2\|\nabla\check  u\|_2\|\nabla^2\check u\|_2\|\nabla v\|_2
+\|\nabla\check u\|_\infty\|\sqrt{\hat\rho}v\|_2^2)\nonumber\\
&\leq&\frac\mu2\|\nabla v\|_2^2+C(\|W\|_2^2+\|\nabla\check u_t\|_2^2\|\sigma\|_{\frac32}^2\\
&&+\|\nabla\check  u\|_2^2\|\nabla^2\check u\|_2^2\|\sigma\|_2^2+\|\nabla\check u\|_\infty\|\sqrt{\hat\rho}v\|_2^2),
\end{eqnarray*}
which plugged into (\ref{DENGYU}) leads to
\begin{eqnarray}
\nonumber\frac{d}{dt}\|\sqrt{\hat\rho}v\|_2^2+\mu\|\nabla v\|_2^2&\leq  & C\|\nabla\check u\|_\infty\|\sqrt{\hat\rho}v\|_2^2
+C(1+\|\nabla\check u_t\|_2^2+\|\nabla\check u\|_2^2|\nabla^2\check u\|_2^2)\\
&&\times(\|W\|_2^2+\|\sigma\|_2^2+\|\sigma\|_{\frac32}^2).\label{UU1}
\end{eqnarray}
The appearance of $\|\sigma\|_{\frac32}$ in the above inequality requires the energy estimate for $\|\sigma\|_{\frac32}$ given in the below.

Testing (\ref{DEQRHO}) with $sign(\sigma)|\sigma|^{\frac12}$ and using the H\"older and Sobolev inequalities that
\begin{eqnarray*}
     \frac{d}{dt} \int  |\sigma|^{\frac32}dx &\leq& C\int  (|v\cdot\nabla \hat{\rho}|\,|\sigma|^\frac12+|\text{div}\check{u}||\sigma|^{\frac32}+| \text{div}\,v\hat{\rho}|\,|\sigma|^\frac12) \\
      &\leq& C(||\nabla \hat{\rho}||_{2}||\sigma||_{\frac32}^{\frac12}||v||_{6} +||\nabla \check{u}||_{\infty}||\sigma||_{{\frac32}}^{\frac32}) +C||\hat{\rho}||_{6}||\nabla v||_{2}||\sigma||_{{\frac32}}^\frac12\\
&\leq&C||\nabla \check{u}||_{\infty}||\sigma||_{{\frac32}}^\frac32+ C ||\nabla \hat{\rho}||_{2}  ||\nabla v||_{2}||\sigma||_{{\frac32}}^\frac12,
\end{eqnarray*}
which gives
\begin{equation}
  \frac{d}{dt}\|\sigma\|_{\frac32}\leq C||\nabla \check{u}||_{\infty}||\sigma||_{{\frac32}} + C ||\nabla \hat{\rho}||_{2}  ||\nabla v||_{2}.
\label{USIGMA32}
\end{equation}

Denote
\begin{eqnarray*}
&&f_1(t)=(\|\sigma\|_{\frac32}+\|\sigma\|_2+\|W\|_2)(t),\quad g_1(t)=\|\sqrt{\hat\rho}v\|_2^2(t), \quad G_1(t)=\mu\|\nabla v\|_2^2(t), \\
&&\delta_1(t)=C\|\nabla\check u\|_\infty(t),\quad A_1=C\sup_{0\leq t\leq T}(\|\hat\rho\|_\infty+\|\hat P\|_\infty+\|\nabla\hat\rho\|_{L^2\cap L^3}+\|\nabla\hat P\|_3)(t),\\
&&\alpha_1(t)=C\|\nabla\check u\|_\infty(t),\quad \beta_1(t)=C(1+\|\nabla\check u_t\|_2^2+\|\nabla\check u\|_2^2\|\nabla^2\check u\|_2^2)(t),
\end{eqnarray*}
then, it follows from (\ref{USIGMA2}), (\ref{UW}), (\ref{UU1}), and (\ref{USIGMA32}) that
\begin{equation*}
\left\{
  \begin{array}{ll}
    \frac{d}{dt} f_1(t)\leq \delta_1 (t)f_1(t) +A_1 \sqrt{G_1(t)}, & \\
    \frac{d}{dt} g_1(t) +G_1(t) \leq \alpha_1(t)g_1(t) +\beta_1(t)f_1^2(t) , &  \\
    f_1(0)=0.&
  \end{array}
\right.
\end{equation*}
By the regularities of $(\hat\rho, \hat u)$ and $(\check\rho, \check u)$, and recalling $\nabla\check u\in L^1(0,T;L^\infty)$, one can easily verify that $\alpha_1,\delta_1,t\beta_1\in L^1((0,T))$.
Therefore, one can apply Lemma \ref{uni-lem} to get $f_1(t)=g_1(t)=G_1(t)=0$, on $(0,T)$, which implies the uniqueness for Case I.

\textbf{Case II: $\rho_\infty>0$. }

By the H\"older and Sobolev inequalities, it follows for $(\rho, u)\in\{(\hat\rho,\hat u),(\check\rho,\check u)\}$ that
\begin{eqnarray*}
  \rho_\infty^4\int|u_t|^2dx&=&\int|\rho_\infty-\rho+\rho|^4|u_t|^2dx\\
&\leq&C\int(|\rho-\rho_\infty|^4+\rho^4)|u_t|^2dx\\
&\leq&C(\|\rho-\rho_\infty\|_6^4\|u_t\|_6^2+\|\rho\|_\infty^3\|\sqrt\rho u_t\|_2^2)\\
&\leq&C(\|\nabla\rho\|_2^4\|\nabla u_t\|_2^2+\|\rho\|_\infty^3\|\sqrt\rho u_t\|_2^2),
\end{eqnarray*}
and, thus,
\begin{equation*}
  \int_0^T t\|u_t\|_2^2dt\leq C\sup_{0\leq t\leq T}(\|\nabla\rho\|_2^4+\|\rho\|_\infty^3)\int_0^T(\|\sqrt t\nabla u_t\|_2^2+\|\sqrt\rho u_t\|_2^2)dt<\infty,
\end{equation*}
that is, $\sqrt tu_t\in L^2(0,T; L^2)$, for $u\in\{\hat u, \check u\}$.

By the H\"older, Sobolev, and Cauchy inequalities, we deduce
\begin{eqnarray*}
  RHS&\leq&\|W\|_2\|\nabla v\|_2+\|\sigma\|_2\|\check u_t\|_3\|v\|_6\\
  &&+\|\sigma\|_2\|\check u\|_6\|\nabla\check u\|_6\|v\|_6+\|\nabla\check u\|_\infty\|\sqrt{\hat\rho}v\|_2^2\\
  &\leq&\|W\|_2\|\nabla v\|_2+\|\nabla\check u\|_\infty\|\sqrt{\hat\rho}v\|_2^2\\
  &&+(\|\check u_t\|_2^{\frac12}\|\nabla\check u_t\|_2^{\frac12}+\|\nabla\check u\|_2\|\nabla^2\check u\|_2)\|\nabla v\|_2\|\sigma\|_2\\
  &\leq&\frac\mu2\|\nabla v\|_2^2+C\|\nabla\check u\|_\infty\|\sqrt{\hat\rho}v\|_2^2+C(1+
\|\check u_t\|_2\|\nabla\check u_t\|_2\\
&&+\|\nabla\check u\|_2^2\|\nabla^2\check u\|_2^2)(\|\sigma\|_2^2+\|W\|_2^2).
\end{eqnarray*}
Plugging this into (\ref{DENGYU}) leads to
\begin{eqnarray}
\nonumber\frac{d}{dt}\|\sqrt{\hat\rho}v\|_2^2+\mu\|\nabla v\|_2^2&\leq  & C\|\nabla\check u\|_\infty\|\sqrt{\hat\rho}v\|_2^2
+C(1+
\|\check u_t\|_2\|\nabla\check u_t\|_2\\
&&+\|\nabla\check u\|_2^2\|\nabla^2\check u\|_2^2)(\|\sigma\|_2^2+\|W\|_2^2).\label{UU2}
\end{eqnarray}

Denote
\begin{eqnarray*}
&&f_2(t)=(\|\sigma\|_2+\|W\|_2)(t),\quad g_2(t)=\|\sqrt{\hat\rho}v\|_2^2(t), \quad G_2(t)=\mu\|\nabla v\|_2^2(t), \\
&&\delta_2(t)=C\|\nabla\check u\|_\infty(t),\quad A_2=C\sup_{0\leq t\leq T}(\|\hat\rho\|_\infty+\|\hat P\|_\infty+\|\nabla\hat\rho\|_2+\|\nabla\hat P\|_3)(t),\\
&&\alpha_2(t)=C\|\nabla\check u\|_\infty(t),\quad \beta_2(t)=C(1+
\|\check u_t\|_2\|\nabla\check u_t\|_2
+\|\nabla\check u\|_2^2\|\nabla^2\check u\|_2^2)(t),
\end{eqnarray*}
then, it follows from (\ref{USIGMA2}), (\ref{UW}), and (\ref{UU2}) that
\begin{equation*}
\left\{
  \begin{array}{ll}
    \frac{d}{dt} f_2(t)\leq \delta_2(t)f_2(t) +A_2 \sqrt{G_2(t)}, & \\
    \frac{d}{dt} g_2(t) +G_2(t) \leq \alpha_2(t)g_2(t) +\beta_2(t)f_2^2(t) , &  \\
    f_2(0)=0.&
  \end{array}
\right.
\end{equation*}
By the regularities of $(\hat\rho, \hat u)$ and $(\check\rho, \check u)$, and recalling $\nabla u\in L^1(0,T; L^\infty)$ and $\sqrt tu_t\in L^2(0,T; L^2)$, for $u\in\{\hat u, \check u\}$, one can easily verify that $\alpha_2,\delta_2,t\beta_2\in L^1((0,T))$.
Therefore, one can apply Lemma \ref{uni-lem} to get $f_2(t)=g_2(t)=G_2(t)=0$, on $(0,T)$, which implies the uniqueness for Case II.

\textbf{Existence:} Set $\rho_{0n}=\rho_0+\frac1n$, $\rho_{n\infty}=\rho_\infty+\frac1n$, and choose $u_{0n}\in D_0^1\cap D^2$, such that $u_{0n}\rightarrow u_0$ in $D_0^1$, as $n\rightarrow\infty$.
Denote
\begin{eqnarray*}
  \psi_0=\|\rho_0\|_\infty+\|\rho_0-\rho_\infty\|_2+\|\nabla\rho_0\|_{L^2\cap L^q}+\|\nabla u_0\|_2,\\
  \psi_{0n}=\|\rho_{0n}\|_\infty+\|\rho_{0n}-\rho_{n\infty}\|_2+\|\nabla\rho_{0n}\|_{L^2\cap L^q}+\|\nabla u_{0n}\|_2.
\end{eqnarray*}
Then, one can easily check that $\psi_{0n}\leq\psi_0+1$, for sufficiently large $n$. By Proposition \ref{Prop9}, there are two positive constants $\mathcal T$ and $C$ depending only on $\mu$, $\lambda$, $a$, $\gamma$, $q$, and $\psi_0$, such that system (\ref{u})--(\ref{rho}), subject to
(\ref{u-bd1}), has a unique solution $(\rho_n, u_n)$, on $\mathbb R^3\times(0,\mathcal T)$, satisfying
\begin{eqnarray}
  \sup_{0\leq t\leq\mathcal T}(\|\rho_n\|_\infty+\|\rho_n-\rho_{n\infty}\|_2+\|\nabla\rho_n\|_2+\|\nabla\rho_n\|_q+\|\partial_t\rho_n\|_2)&\leq& C,\label{EX1} \\
  \sup_{0\leq t\leq\mathcal T} \|\nabla u_n\|_2^2 +\int_0^\mathcal T(\|\nabla^2u_n\|_2^2+\|\sqrt{\rho_n} \partial_tu_n\|_2^2)dt&\leq& C,\label{EX2}  \\
  \sup_{0\leq t\leq\mathcal T}\|\sqrt t\nabla^2u_n\|_2^2
  +\int_0^\mathcal T(\|\sqrt t\partial_t\nabla u_n\|_2^2+\|\sqrt t\nabla^2u_n\|_q^2)dt&\leq& C. \label{EX3}
\end{eqnarray}

Thanks to (\ref{EX1})--(\ref{EX3}), there is a subsequence, still denoted by $(\rho_n, u_n)$, and a pair $(\rho, u)$, satisfying
\begin{eqnarray}
  \label{R1}\rho-\rho_\infty\in L^\infty(0,\mathcal T; H^1\cap W^{1,q}), \quad\rho_t\in L^\infty(0,\mathcal T; L^2), \\
  \label{R2}u\in L^\infty(0,\mathcal T;D_0^1)\cap L^2(0,\mathcal T; D^2),\\
  \label{R3}\sqrt t\nabla^2u\in L^\infty(0,\mathcal T; L^2),\quad
   \sqrt t\nabla u_t\in L^2(0,\mathcal T; L^2),\quad\sqrt t\nabla^2u\in L^2(0,\mathcal T; L^q),
\end{eqnarray}
such that
\begin{eqnarray}
  \rho_n-\rho_{n\infty}\overset{*}{\rightharpoonup}\rho-\rho_\infty,&&\text{ in }L^\infty(0,\mathcal T; H^1\cap W^{1,q}), \label{WCG1}\\
  \partial_t\rho_n\overset{*}{\rightharpoonup}\rho_t, &&\text{ in }L^\infty(0,\mathcal T; L^2), \label{WCG3}\\
  u_n\overset{*}{\rightharpoonup}u,&&\text{ in }L^\infty(0,\mathcal T; D_0^1),\label{WCG4}\\
  u_n \rightharpoonup u,&&\text{ in }L^2(0, \mathcal T; D^2),\label{WCG5}\\
  \partial_tu_n\rightharpoonup u_t,&&\text{ in }L^2(\delta,\mathcal T; D_0^1),\label{WCG6}
\end{eqnarray}
for any $\delta\in(0,\mathcal T)$.
Note that $W^{1,q}\hookrightarrow\hookrightarrow C(\overline{B_k})$, for any positive integer $k$. With the aid of (\ref{WCG1})--(\ref{WCG6}), by the Aubin-Lions lemma, and using the
Cantor's diagonal argument, there is a sequence, still denoted by $(\rho_n, u_n)$, such that
\begin{eqnarray}
  \rho_n\rightarrow\rho,&&\text{ in }C([0,\mathcal T]; C(\overline{B_k})), \label{SCG7}\\
  u_n\rightarrow u,&&\text{ in }L^2(\delta,\mathcal T; H^1(B_k))\cap C([\delta,\mathcal T]; L^2(B_k)),\label{SCG8}
\end{eqnarray}
for any positive integer $k$, and for any $\delta\in(0,\mathcal T)$, where $B_k$ is the ball in $\mathbb R^3$ centered at the origin of radius $k$.
By the aid of (\ref{WCG6}), (\ref{SCG7}) and (\ref{SCG8}), one has
\begin{eqnarray}
  \rho_nu_n\rightarrow\rho u,&&\text{ in }L^2(B_k\times(0,\mathcal T)), \label{NTC1}\\
  \rho_n\partial_tu_n\rightharpoonup\rho u_t, &&\text{ in }L^2(B_k\times(\delta,T)),\label{NTC2}\\
  \rho_n(u_n\cdot\nabla)u_n\rightarrow\rho(u\cdot\nabla)u,&&\text{ in }L^1(B_k\times(\delta,T)),\label{NTC3} \\
  a\rho_n^\gamma\rightarrow a\rho^\gamma,&&\text{ in }C(\overline{B_k}\times[0,\mathcal T]),\label{NTC4}
\end{eqnarray}
for any $\delta\in(0,\mathcal T)$, and for any positive integer $k$.

Due to (\ref{WCG3}), (\ref{WCG5}), and (\ref{NTC1})--(\ref{NTC4}), one can take the limit to the system of $(\rho_n, u_u)$ to show
that $(\rho, u)$ is a strong solution to system (\ref{u})--(\ref{rho}), on $\mathbb R^3\times(0,\mathcal T)$, satisfying the regularities (\ref{R1})--(\ref{R3}). The convergence (\ref{SCG7}) implies that the initial value of $\rho$ is $\rho_0$. The regularity of $\rho-\rho_\infty
\in C([0,\mathcal T]; L^2)$ follows from (\ref{R1}).

The regularity $\sqrt\rho u_t\in L^2(0,\mathcal T; L^2)$ is verified as follows. It follows from (\ref{WCG6}) and (\ref{SCG7}) that
$\sqrt{\rho_n}\partial_tu_n\rightharpoonup\sqrt\rho u_t$ in $L^2(0,\mathcal T; L^2(B_k)),$
for any positive integer $k$. Then, the weakly lower semi-continuity of the norms implies
\begin{equation*}
  \int_0^\mathcal T\|\sqrt\rho u_t\|_{L^2(B_k)}^2dt\leq\liminf_{n\rightarrow\infty}\int_0^\mathcal T\|\sqrt{\rho_n}\partial_tu_n\|_{L^2(B_k)}^2
  dt\leq C,
\end{equation*}
for a positive constant $C$ independent of $k$. Taking $k\rightarrow\infty$ in the above inequality yields
$\sqrt\rho u_t\in L^2(0,\mathcal T; L^2)$.
%To prove
%$\rho u\in C([0,\mathcal T]; L^2)$, noticing that $\rho u\in L^\infty(0,\mathcal T; L^2)$,  it suffices to prove $\partial_t(\rho u)\in L^2(0,\mathcal T; L^2)$.

Finally, we show that $\rho u\in C([0,\mathcal T]; L^2)$ and $\rho u|_{t=0}=\rho_0 u_0$.  By (\ref{rho}) and (\ref{R1})--(\ref{R2}), and noticing that $\|u\|_\infty\leq C\|\nabla u\|_2^{\frac12}\|\nabla^2u\|_2^{\frac12}$, guaranteed by the Gagliardo-Nirenberg and Sobolev embedding inequalities, it follows
\begin{eqnarray}
  &&\int_0^\mathcal T\|\partial_t(\rho u)\|_2^2dt\nonumber\\
  &=& \int_0^\mathcal T
  \|-(u\cdot\nabla\rho+\text{div}\,u\rho)u+\rho u_t\|_2^2dt\nonumber\\
  &\leq&\int_0^\mathcal T\left(\|u\|_\infty^2\|\nabla\rho\|_2+\|u\|_\infty\|\nabla u\|_2\|\rho\|_\infty+\|\rho\|_\infty^{\frac12}\|\sqrt\rho u_t\|_2\right)^2dt\nonumber\\
  &\leq&C\int_0^\mathcal T\left(\|\nabla u\|_2^2\|\nabla^2u\|_2^2 +\|\nabla u\|_2^3\|\nabla^2u\|_2
  + \|\sqrt\rho u_t\|_2^2\right)dt \nonumber\\
  &\leq&C\int_0^\mathcal T(1+\|\nabla^2u\|_2^2+\|\sqrt\rho u_t\|_2^2)dt\leq C. \label{CT1}
\end{eqnarray}
Similarly, it follows from (\ref{EX1})--(\ref{EX2}) that
$\int_0^\mathcal T\|\partial_t(\rho_n u_n)\|_2^2dt\leq C,$ for a positive constant $C$ independent of $n$. Thanks to theses, we
deduce by the H\"older inequality that
\begin{eqnarray*}
  &&\|(\rho u)(\,\cdot\,,t)-\rho_0u_0\|_{L^2(B_R)}\\
  &\leq&\| \rho u - \rho_nu_n \|_{L^2(B_R)}
  +\| \rho_nu_n -\rho_{0n}u_0\|_{L^2(B_R)} +\|\rho_{0n}u_0-\rho_0u_0\|_{L^2(B_R)}\\
  &\leq&\| \rho u - \rho_nu_n\|_{L^2(B_R)}
  +\int_0^t\|\partial_t(\rho_nu_n)\|_{L^2(B_R)}d\tau+\frac{C}{n}\|u_0\|_{L^2(B_R)}\\
  &\leq&\|\rho u-\rho_nu_n\|_{L^2(B_R)}
  +C\sqrt t+\frac{C}{n}\|u_0\|_{L^2(B_R)},
\end{eqnarray*}
for a positive constant $C$ independent of $n$ and $R$. Noticing that
$\rho_nu_n\rightarrow\rho u$ in $C([\delta,\mathcal T]; L^2(B_R)),$ for any $\delta\in(0,\mathcal T)$,
guaranteed by (\ref{SCG7})--(\ref{SCG8}), one can pass the limits $n\rightarrow\infty$ first and then $R\rightarrow\infty$
to the above inequality, and end up with $\|(\rho u)(\,\cdot\,,t)-\rho_0u_0\|_2\leq C\sqrt t.$
This implies $\rho u\in L^\infty(0,\mathcal T; L^2)$ and $\rho u|_{t=0}=\rho_0 u_0$. Thank to these and
recalling (\ref{CT1}), one gets further that $\rho u\in C([0,\mathcal T]; L^2)$. This completes the proof.
\end{proof}

\section*{Acknowledgments}
H. Gong was partially supported by Natural Science Foundation of China (grant No. 11601342, No.11871345, No.61872429) and Natural Science Foundation of SZU (grant No. 2017055). J.Li was partly supported by start-up fund 550-8S0315 of the South China Normal University, the NSFC under 11771156 and 11871005, and the Hong Kong RGC Grant
CUHK-14302917. X. Liu was partial supported by Natural Science Foundation of China(grant No.11631011).

\end{document}